\newtheorem{thm}{Theorem}[section]
\newtheorem{lemma}[thm]{Lemma}
\theoremstyle{definition}
\newtheorem{theorem}[thm]{Theorem}
\newtheorem{proposition}[thm]{Proposition}
\def\mR {\mathbb {R}^n}
\def\dstyle{\displaystyle}
\theoremstyle{definition}
\newtheorem{rem}[thm]{Remark}
\numberwithin{equation}{section}
\begin{document}





\title[On the Sobolev-Poincar\'e inequality of CR-manifolds]{On the Sobolev-Poincar\'e inequality of CR-manifolds}

\author{Yi Wang}
\address{Department of Mathematics, Johns Hopkins University, Baltimore MD 21218}
\email{ywang@math.jhu.edu}
\thanks{The research of the author is partially supported
by NSF grant DMS-1612015.}
\author{Paul Yang}
\address{Department of Mathematics, Princeton University, Princeton, NJ 08540}
\email{yang@math.princeton.edu}
\thanks{The research of the author is partially supported
by NSF grant DMS-1509505.}

\date{}



\begin{abstract}
The purpose is to study the CR-manifold with a contact structure conformal to the Heisenberg group. 
In our previous work \cite{WY}, we have proved that if the $Q'$-curvature is nonnegative, and the integral of $Q'$-curvature is below the dimensional bound $c_1'$, then we have the isoperimetric inequality. In this paper, we manage to drop the condition on the nonnegativity of the $Q'$-curvature. 
We prove that the volume form $e^{4u}$ is a strong $A_\infty$ weight. As a corollary, we prove the Sobolev-Poincar\'e inequality on a class of CR-manifolds with integrable $Q'$-curvature.


\end{abstract}

\maketitle

\section{Introduction} \label{sect:intro}
On a four dimensional manifold, the Paneitz operator $P_4$ and the Branson's $Q$-curvature \cite{Branson} 
have many analogous properties as the Laplacian operator $\Delta_g$ and the Gaussian curvature $K_g$ on surfaces. 
The Paneitz operator is defined as $$P_g=\Delta^2+\delta(\frac{2}{3}Rg-2 Ric)d,$$
where $\delta$ is the divergence, $d$ is the differential, $R$ is the scalar curvature of $g$, and $Ric$
is the Ricci curvature tensor. 
The $Q$-curvature is defined as 
$$Q_g=\frac{1}{12}\left\{-\Delta R +\frac{1}{4}R^2 -3|E|^2 ,\right\}
$$
where $E$ is the traceless part of $Ric$, and $|\cdot|$ is taken with respect to the metric $g$.
The most important two properties for the pair $(P_g, Q_g)$ are that under the conformal change $g_{w}=e^{2w}g_0$, \\
1. $P_{g}$ transforms by $P_{g_w}(\cdot)=e^{-4w}P_{g_0}(\cdot)$;\\
2. $Q_{g}$ satisfies the fourth order equation
$$P_{g_{0}}w+2Q_{g_0}=2Q_{g_{w}}e^{4w}.$$
As proved by Beckner \cite{Beckner} and Chang-Yang \cite{CY}, the pair $(P_g, Q_g)$ also apperas in the Moser-Trudinger inequality for higher order operators. 

On CR-manifold, it is a fundamental problem to study the existence and analogous properties of CR invariant operator $P$ and curvature scalar invariant $Q$. Graham and Lee \cite{GL} has studied a fourth-order CR covariant operator with leading term $\Delta_b^2 +T^2$ and Hirachi \cite{Hirachi} has identified the $Q$-curvature which is related to $P$ through a change of contact form. However, although the integral of the $Q$-curvature on a compact three-dimensional CR-manifold is a CR invariant, it is always equal to zero. And in many interesting cases when the CR three manifold is the boundary of a strictly pseudoconvex domains, the $Q$-curvature vanishes everywhere. As a consequence, it is desirable to search for some other invariant operators and curvature invariants on a CR-manifold that are more sensitive in the CR geometry. 
The work of Branson, Fontana and Morpurgo \cite{BFM} aims to find such a pair $(P', Q')$ on a CR sphere. Later, the definition of $Q'$-curvature is generalized to all pseudo-Einstein CR-manifolds by the work of Case-Yang \cite{CY} and that of Hirachi \cite{Hirachi2}.  
The construction uses the strategy of analytic continuation in dimension by Branson \cite{Branson}, restricted to the subspace of the CR pluriharmonic functions. 
$$P'_4:=\lim_{n\rightarrow 1} \frac{2}{n-1} P_{4,n}|_{\mathcal{P}}.$$
Here $P_{4,n}$ is the fourth-order covariant operator that exists for every contact form $\theta$
by the work of Gover and Graham \cite{GG}. By \cite{GL}, the space of CR pluriharmonic functions $\mathcal{P}$ is always contained in the kernel of $P_4$. 

In this paper, we want explore the geometric meaning of this newly introduced conformal invariant $Q'$-curvature. 

In Riemannian geometry, a classical isoperimetric inequality on a complete simply connected surface $M^2$, called Fiala-Huber's \cite{Fiala}, \cite{Huber} isoperimetric inequality
\begin{equation}\label{FialaHuber}
Vol(\Omega)\leq \frac{1}{2(2\pi-\int_{M^2}K_g^+ dv_g)} Area(\partial \Omega)^2,
\end{equation}
where $K_g^+$ is the positive part of the Gaussian curvature $K_g$. Also $\int_{M^2}K_g^+ dv_g< 2\pi$ is the sharp bound for the isoperimetric inequality to hold. 

In \cite{YW15}, we generalize the Fiala-Huber's isoperimetric inequality to all even dimensions, replacing the role of the Gaussian curvature in dimension two by that of the $Q$-curvature in higher dimensions:

Let $(M^n,g)=(\mathbb{R}^n, g= e^{2u}|dx|^2)$ be a complete noncompact even dimensional manifold.
Let $Q^+$ and $Q^-$ denote the
positive and negative part of $Q_g$ respectively; and $dv_g$ denote the volume form of $M$. Suppose $g= e^{2u}|dx|^2$ is a ``normal" metric, i.e.
\begin{equation}\label{normal}u(x)=
\displaystyle \frac{1}{c_n}\int_{\mathbb{R}^n} \log \frac{|y|}{|x-y|} Q_{g}(y) dv_g(y) + C;
\end{equation}
for some constant $C$.
If
\begin{equation}\label{assumption1}
\alpha:= \int_{M^n}Q^{+}dv_g < c_n
\end{equation}
where $c_n=2^{n-2}(\frac{n-2}{2})!\pi^{\frac{n}{2}}$,
and \begin{equation}\label{assumption2}
\beta:=\int_{M^n}Q^{-}dv_g < \infty,
\end{equation}
then $(M^n,g)$ satisfies the isoperimetric inequality with isoperimetric constant depending only on $n, \alpha$ and
$\beta$.
Namely, for any bounded domain $\Omega\subset M^n$ with smooth boundary,
\begin{equation}\label{1.89}
|\Omega|_g^{\frac{n-1}{n}}\leq C(n, \alpha,\beta) |\partial \Omega |_g.
\end{equation}

In our previous paper \cite{WY}, we have studied the $Q'$-curvature and $P'$ operator, and proved that if $(\mathbb{H}^1, e^{u}\theta)$ for pluriharmonic function $u$ is a complete CR-manifold with nonnegative $Q'$ curvature and nonnegative Webster scalar curvature at infinity, if in addition $Q'$ curvature
satisfies \begin{equation}
\displaystyle \int_{\mathbb{H}^1} Q' e^{4u}  \theta\wedge d\theta< c'_1,
\end{equation}
then $e^{4u}$ is an $A_1$ weight. Here $c'_1$ is the constant in the fundamental solution of $P'$ operator (See \cite{WY}). As a corollary, we have derived the isoperimetric inequality on CR-manifold $(\mathbb{H}^1, e^{u}\theta)$:
\begin{equation} Vol(\Omega)\leq C Area(\partial \Omega)^{4/3}. \end{equation}
Here the constant $C$ is controlled by  
$c'_1-\int_{\mathbb{H}^1} Q' e^{4u}  \theta\wedge d\theta$. 
To prove this result, we notice that the class of pluriharmonic functions $\mathcal{P}$ is the relevant subspace of functions for the conformal factor $u$.

The purpose of the current paper is two-fold. We will first study the case when $Q'$ curvature is negative. Then we will discuss the general case when 
$Q'$ curvature does not have a sign. The main results of the paper are stated in the following.

\begin{thm}\label{main2}Let $(\mathbb{H}^1, e^{u}\theta)$ be a complete CR-manifold, where $\theta$ denotes the contact form on the 
Heisenberg group $\mathbb{H}^1$ and $u$ is a pluriharmonic funcion on $\mathbb{H}^1$. If the $Q'$-curvature is negative, and the Webster scalar curvature is nonnegative at infinity. 
If 
\begin{equation}
\displaystyle \int_{\mathbb{H}^1} Q' e^{4u}  \theta\wedge d\theta< \infty,
\end{equation}
then $e^{4u}$ is a strong $A_\infty$ weight.
\end{thm}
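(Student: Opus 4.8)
The plan is to transplant to the sub-Riemannian geometry of $\mathbb{H}^1$ the strategy we used in the Euclidean even-dimensional setting for nonpositively curved normal metrics \cite{YW15}, combined with the CR potential theory developed in \cite{WY}. Throughout set $\alpha:=-\int_{\mathbb{H}^1}Q'e^{4u}\,\m$, which is the quantity assumed finite (as $Q'<0$), and let $|\cdot|_H$ denote the Koranyi gauge, so that the homogeneous dimension is $4$.

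First I would establish an integral representation for the conformal factor. Since $\mathbb{H}^1$ with its standard contact form $\theta$ is CR-flat we have $Q'_\theta=0$, so the $P'$-equation reduces to $P'u=2Q'e^{4u}$ on the space $\mathcal{P}$ of CR-pluriharmonic functions. Convolving against the fundamental solution of $P'$ on $\mathcal{P}$, whose leading term is $c_1'^{-1}\log|x^{-1}y|_H$, one obtains
\begin{equation}\label{rep}
u(x)=C+\frac{1}{c_1'}\int_{\mathbb{H}^1}\log|x^{-1}y|_H\,d\nu(y),\qquad d\nu:=-Q'e^{4u}\,\m\ \geq 0,\qquad \nu(\mathbb{H}^1)=\alpha<\infty .
\end{equation}
The step that needs genuine work, exactly as in \cite{WY}, is that the pluriharmonic remainder is an additive constant; this is where completeness of $(\mathbb{H}^1,e^u\theta)$ and nonnegativity of the Webster scalar curvature at infinity enter, via a Bôcher-type theorem for $P'$ together with a Liouville theorem stating that a CR-pluriharmonic function on all of $\mathbb{H}^1$ with at most logarithmic growth must be constant. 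In contrast with \cite{WY}, here $\nu\geq 0$ forces $u(x)\sim \alpha c_1'^{-1}\log|x|_H\to+\infty$, so completeness is automatic, but one must still bound the growth rate from above before the Liouville argument applies. This representation is essential and not cosmetic: a nonconstant pluriharmonic summand would make $e^{4u}$ grow anisotropically and destroy the $A_\infty$ property.

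Second, from \eqref{rep} I would derive the oscillation estimates giving the strong $A_\infty$ property. Since $\nu$ is absolutely continuous with a locally bounded density and $\log|x^{-1}y|_H\leq\log(Cr)$ for $x,y\in B(x_0,r)$, the potential is locally bounded above, and controlling the near part $\log\frac{1}{|x^{-1}y|_H}$ (locally integrable in homogeneous dimension $4$) shows it is locally bounded below; hence $e^{\pm 4u}\in L^\infty_{\mathrm{loc}}$. To upgrade these to the scale-invariant bounds needed for $A_\infty$ and for metric comparability, on each Carnot--Carathéodory ball $B=B(x_0,r)$ I would split $\nu=\nu|_{2B}+\nu|_{(2B)^c}$: the far part contributes to $u$ a function whose oscillation on $B$ is controlled by $c_1'^{-1}\nu((2B)^c)$ times a harmonic-type modulus, while the near part is the logarithmic potential of a finite measure on $2B$, to which one applies the sharp local exponential-integrability estimate for such potentials (the Heisenberg analogue of the John--Nirenberg/Moser--Trudinger bound, using $\log|\cdot|_H\in\mathrm{BMO}$ for CC-balls). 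This yields reverse-Hölder inequalities for $e^{4u}$ on CC-balls with constants depending only on $\alpha$, hence $e^{4u}\in A_\infty(\m)$, together with the doubling of $\mu:=e^{4u}\,\m$. Finally, by the David--Semmes criterion, strong $A_\infty$ follows once the induced quasidistance $\delta_\mu(x,y):=\mu(B_{xy})^{1/4}$ is shown comparable to a genuine metric, which reduces to a chain/quasi-triangle estimate for $\delta_\mu$ along CC-geodesics; this follows from the same oscillation bounds by comparing, scale by scale, $\mu$ of a ball with its $\m$-measure times the value of $e^{4u}$ near its center.

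The main obstacle I expect lies in the second step and is twofold. First, all the potential-theoretic input — local integrability and BMO bounds for $\log|x^{-1}y|_H$, the sharp exponential integrability of its potentials, and the passage from local to scale-invariant estimates — must be carried out in the anisotropic Heisenberg geometry, where CC-balls are not comparable to Euclidean balls and $|\cdot|_H$ is only a quasi-norm, so the dilation structure and the value $Q=4$ must be used with care. Second, and conceptually more delicate than the positive case of \cite{WY}, where the smallness $\alpha<c_1'$ supplied uniform estimates for free: here $\alpha$ is merely finite, so $\nu(2B)$ need not be small for a fixed ball $B$, and one must exploit absolute continuity of $\nu$ (whence $\nu(B(x_0,\rho))\to 0$ as $\rho\to0$ for each $x_0$) together with a covering/iteration argument to recover uniformity of the reverse-Hölder and metric-comparability constants, in the spirit of \cite{YW15}.
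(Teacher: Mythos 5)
Your first step (the normal integral representation of $u$) coincides with the paper's starting point, but there the representation is simply quoted from the Proposition in \cite{WY} (nonnegative Webster curvature at infinity implies $u$ is normal), so no new work is required at that stage. The genuine gap is in your second step: you treat the passage from reverse-H\"older/exponential-integrability estimates to the strong $A_\infty$ property as if it reduced to a quasi-triangle or chaining estimate for $\delta_\mu(x,y)=\mu(B_{xy})^{1/4}$. That is not the David--Semmes condition used here. Strong $A_\infty$ requires the two-sided comparison between $\delta_{\omega}(x,y)=\bigl(\int_{B_{xy}}e^{4u}\,dz\bigr)^{1/4}$ and the weighted length $\inf_\gamma\int_\gamma e^{u}\,ds$ over \emph{all} contact curves $\gamma\subset B_{xy}$ joining $x$ and $y$, and the hard direction is the lower bound on the latter. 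Membership in $A_\infty$ (or $A_p$, or doubling of $\mu$) does not imply this: $A_\infty$ weights need not be strong $A_\infty$ weights, which is precisely why the paper's Proposition 4.1 needs extra hypotheses. John--Nirenberg/Moser--Trudinger bounds control the \emph{Lebesgue measure} of the set where the near-part potential is very negative, but a curve is one-dimensional and can be threaded entirely through a set of tiny Lebesgue measure, so such bounds cannot prevent the weighted geodesic distance from collapsing relative to $\delta_\omega$.

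What the paper actually does at this point, and what your outline is missing, is the following. After rescaling to $|x-y|=2$ and splitting $u=u_1+u_2$ into the potentials over $10B$ and its complement, the far part $u_2$ has oscillation on $2B$ bounded by $\beta/(4c_1')$; the upper bound $\int_{2B}e^{4u}\leq C(\beta)\,e^{4u_2(p_0)}e^{4\bar c}$ follows from Jensen's inequality, and here the negativity of $Q'$ is used in an essential way: the exponentiated near-part is $|z-y|^{4\beta_{10}/c_1'}$, a \emph{positive} power of the distance, bounded on bounded sets, so no smallness of the local mass is needed (your proposed covering/iteration to compensate for $\nu(2B)$ not being small is unnecessary). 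The crucial lower bound is obtained from a Cartan-type lemma: the set where the local logarithmic potential $\hat u_1$ exceeds $C_0\beta/\epsilon$ in absolute value has one-dimensional Hausdorff content less than $10\epsilon$, and projecting any contact curve joining $x$ and $y$ onto the contact segment $I_{xy}$ (the projection has Jacobian at most $1$ and does not increase $\mathcal{H}^1$-content) shows that the curve must have length at least $3/2$ outside this exceptional set, where $e^{\hat u_1}\geq e^{-20C_0\beta}$. It is this $\mathcal{H}^1$-content estimate, not a reverse-H\"older or BMO bound, that rules out curves hiding in the region where the weight is exponentially small; without an argument of this kind your plan does not establish the lower-bound half of the strong $A_\infty$ comparison, and the theorem does not follow.
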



Note that $e^{4u}$ is the volume form of this conformal metric, where $4$ is the homogeneous dimension of $\mathbb H^1$. The descriptions of $A_1$ weight and strong $A_\infty$ weight will be in Section \ref{sect:preliminary}.

We will then discuss the case when the $Q'$-curvature does not have a sign.

\begin{thm}\label{main3}Let $(\mathbb{H}^1, e^{u}\theta)$ be a complete CR-manifold, where $\theta$ denotes the contact form on the 
Heisenberg group $\mathbb{H}^1$ and $u$ is a pluriharmonic funcion on $\mathbb{H}^1$. If the Webster scalar curvature is nonnegative at infinity,
If 
\begin{equation}\alpha:=\displaystyle \int_{\mathbb{H}^1} Q'^+ e^{4u}  \theta\wedge d\theta< c'_1,\end{equation}
and 
\begin{equation}\beta:=\displaystyle \int_{\mathbb{H}^1} Q'^- e^{4u}  \theta\wedge d\theta< \infty,\end{equation}
then $e^{4u}$ is a strong $A_\infty$ weight.
\end{thm}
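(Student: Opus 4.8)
We sketch the strategy, which isolates the two regimes already treated in this paper --- the subcritical positive case of \cite{WY} and the negative case of Theorem \ref{main2} --- and then passes from weight estimates to a statement about the David--Semmes quasi-metric. The starting point is the integral representation of the conformal factor: as in \cite{WY}, completeness of $(\mathbb{H}^1,e^u\theta)$ together with nonnegativity of the Webster scalar curvature at infinity forces $u$ to be a \emph{normal} solution of the $P'$-equation, so that
\begin{equation}\label{eq:rep}
u(\xi)=\frac{1}{c_1'}\int_{\mathbb{H}^1}\log\frac{|\eta|}{|\xi^{-1}\circ\eta|}\,Q'(\eta)\,e^{4u(\eta)}\,\theta\wedge d\theta(\eta)+C,
\end{equation}
where $|\cdot|$ is the Koranyi gauge, $\circ$ the Heisenberg group law, and $C$ a constant. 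Writing $Q'=Q'^+-Q'^-$ and setting $d\mu_1:=Q'^+e^{4u}\,\theta\wedge d\theta$, $d\mu_2:=Q'^-e^{4u}\,\theta\wedge d\theta$, the hypotheses read $\|\mu_1\|=\alpha<c_1'$, $\|\mu_2\|=\beta<\infty$, and both measures are absolutely continuous; splitting \eqref{eq:rep} gives $u=u_1-u_2+C$ with $u_i(\xi)=\tfrac{1}{c_1'}\int\log\frac{|\eta|}{|\xi^{-1}\circ\eta|}\,d\mu_i$, hence $w:=e^{4u}=e^{4C}e^{4u_1}e^{-4u_2}$.

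Next I would analyze the two factors separately. For $e^{4u_1}$: since $\|\mu_1\|=\alpha<c_1'$ is subcritical, the argument of \cite{WY} applied to $u_1$ yields the $A_1$ bound with constant depending only on $\alpha$, and the subcriticality furthermore gives a logarithmic oscillation estimate for $u_1$ across Koranyi balls, which is exactly the information making $e^{4u_1}$ a strong $A_\infty$ weight. For $e^{-4u_2}$: its exponent equals, up to a bounded additive term, $\tfrac{4}{c_1'}\int\log|\xi^{-1}\circ\eta|\,d\mu_2$, a superposition of Koranyi-power exponents of total weight $4\beta/c_1'>0$; since $\beta<\infty$, the analysis underlying Theorem \ref{main2} shows $e^{-4u_2}$ is a strong $A_\infty$ weight which is doubling (constant depending only on $\beta$) and grows at most like $|\xi|^{4\beta/c_1'}$ along the gauge.

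It then remains to assemble these at the level of the quasi-metric $\delta_w(\xi,\eta):=w(B_{\xi\eta})^{1/4}$, where $B_{\xi\eta}$ is the smallest Koranyi ball containing $\xi$ and $\eta$ and $w(B)=\int_B w\,\theta\wedge d\theta$. One first checks $w\in A_\infty$; this does not follow formally from the two factor estimates, since $A_\infty$ (let alone strong $A_\infty$) is not stable under multiplication, but it can be reproved directly, because $e^{4u_1}$ is smooth with subcritically controlled behavior at infinity --- here $\alpha<c_1'$ together with $\beta<\infty$ guarantees the resulting power weight has exponent $>-4$. With $w\in A_\infty$, $\delta_w$ is a quasi-metric, and by the David--Semmes characterization it suffices to verify the chain condition for $\delta_w$: I would join $\xi$ to $\eta$ by a Carnot--Carath\'eodory geodesic of the conformal contact structure $e^u\theta$ (whose volume form is $w\,\theta\wedge d\theta$), subdivide it into sub-balls on which $w$ is comparable to a constant --- possible by the combined logarithmic oscillation bound for $u=u_1-u_2+C$ --- and sum; the metric comparable to $\delta_w$ is then the Carnot--Carath\'eodory distance of $e^u\theta$, so $w$ is a strong $A_\infty$ weight. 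The Sobolev--Poincar\'e inequality on $(\mathbb{H}^1,e^u\theta)$ then follows from the Sobolev inequality for strong $A_\infty$ weights, in the form valid on the homogeneous space $\mathbb{H}^1$.

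The hard part will be this final assembly. Because strong $A_\infty$ is not preserved under products, the subcritical estimate for $u_1$ and the finite-mass power estimate for $u_2$ must be fused into a single oscillation/Harnack bound for $u=u_1-u_2+C$, one strong enough to subdivide an $e^u\theta$-geodesic into sub-balls of bounded weight distortion, and the chain argument must be carried out in the sub-Riemannian geometry of $\mathbb{H}^1$, where geodesics are not horizontal lines and the Euclidean comparisons of \cite{YW15} are unavailable. In particular one must control the interaction between the (possibly nearly concentrated) measure $\mu_1$ and the measure $\mu_2$ and ensure that the near-critical growth of the positive potential is not amplified by the negative one; this is precisely where the bound $\alpha<c_1'$ must be used quantitatively rather than merely qualitatively.
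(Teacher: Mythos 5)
Your opening moves coincide with the paper's: write $u$ as a normal potential, split it into the potentials $u_1, u_2$ of $Q'^+e^{4u}$ and $Q'^-e^{4u}$, get $e^{4u_1}\in A_1$ from \cite{WY} using $\alpha<c_1'$, and get the strong $A_\infty$ property of the negative factor from Theorem \ref{main2} using $\beta<\infty$. But the two steps you yourself flag as the crux are exactly where the proposal has genuine gaps. First, the claim that $e^{4u}\in A_\infty$ ``can be reproved directly, because $e^{4u_1}$ is smooth with subcritically controlled behavior at infinity'' and because ``the resulting power weight has exponent $>-4$'' is not an argument: $Q'^{\pm}e^{4u}$ are general finite measures, not power weights, and the $A_p$ condition must be verified uniformly over all balls, including balls near arbitrary concentrations of the two measures. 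The paper's mechanism here is concrete and you have no substitute for it: since $\epsilon\beta<c_1'$ for $\epsilon$ small, the criterion of \cite{WY} applies to the measure $\epsilon\, Q'^-e^{4u}$ and shows $e^{-4\epsilon u_-}$ is itself an $A_1$ weight; choosing $p$ with $p'/p=\epsilon$ and using the two pointwise $A_1$ bounds (for $e^{4u_+}$ and for $e^{-4\epsilon u_-}$) one verifies the $A_p$ inequality for $e^{4u}=e^{4u_+}e^{4u_-}$ directly, with constant depending only on $\alpha,\beta$.

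Second, your final assembly---subdividing a Carnot--Carath\'eodory geodesic of $e^u\theta$ into sub-balls ``on which $w$ is comparable to a constant'' by a combined oscillation bound for $u=u_1-u_2+C$---cannot work as stated, because no such uniform oscillation or Harnack bound exists: $\beta$ is only assumed finite, so near concentrations of $Q'^-e^{4u}$ the weight degenerates like an arbitrarily large positive power of the gauge distance, and no subdivision makes the weight distortion bounded there. Both in the proof of Theorem \ref{main2} and in the mixed-sign case, the lower bound $\delta_w(x,y)\lesssim d_w(x,y)$ is obtained by a different mechanism: either the Cartan-lemma exceptional set $E_\epsilon$ of small $\mathcal{H}^1$-measure together with projection of the curve onto the contact segment (so the curve spends definite length where the potential is controlled), or, in the present theorem, the Semmes-type product lemma transplanted to $\mathbb{H}^1$ (Proposition \ref{prop:3.1}): if $\omega_1\in A_1$, $\omega_2$ is strong $A_\infty$, and $\omega_1^r\omega_2\in A_\infty$, then $\omega_1^r\omega_2$ is strong $A_\infty$; here the pointwise-from-below control furnished by the $A_1$ property of $e^{4u_+}$ plays the role of the oscillation bound you are hoping to fuse. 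Without either of these ingredients your chain argument does not close, so the proposal as written does not prove the theorem, although its decomposition and its diagnosis of where the difficulty lies are correct.
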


As a corollary of Franchi-Lu-Wheeden \cite{FLW},  we will show that $(\mathbb{H}^1, e^{u}\theta)$ satisfies Sobolev-Poincar\'e inequality. We remark that on a CR-manifold 
$(\mathbb{H}^1, e^{u}\theta)$, the David-Semme's \cite{DS} type of isoperimetric inequality is still an open question for strong $A_\infty$ weights.

\begin{thm}\label{main4}Let $(\mathbb{H}^1, e^{u}\theta)$ satisfy the same assumptions as in Theorem \ref{main3}. Let $K$ be a compact subset of $\Omega$. Then there exists $r_0$ depending on $K, \Omega$, and $\{X_j\}$ such that if $B=B(x,r)$ is a ball with $x\in K$ and $0<r<r_0$, and if $e^{4u}$ is $A_p $ weight for some $1 \leq p<4$.  Let $\mu(x):= e^{4u} dx$, $\nu(x):=  e^{(4-p)u} dx$.
Then
 \begin{equation}
 (\frac{1}{\mu(B)} \int_B |f(x)- f_B |^q d\mu	)^{1/q} \leq cr (\frac{1}{\nu(B)} \int_B |\nabla_b f(x)|^p d\nu)^{1/p},
 \end{equation}
for any $f\in Lip (\bar{B})$, with $f_B= \frac{1}{\mu (B)}\int_B f(x) d\mu$. The constant $c$ depends only on $K, \Omega$, $\alpha, \beta, p$.


\end{thm}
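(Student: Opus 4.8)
The plan is to derive the inequality from the weighted Poincar\'e theory of Franchi-Lu-Wheeden \cite{FLW}, with Theorem \ref{main3} supplying the weight. Throughout, $X_1,X_2$ denote the standard left-invariant horizontal vector fields on $\mathbb{H}^1$, so that $\nabla_b f=(X_1f,X_2f)$; since $[X_1,X_2]$ spans the center they satisfy H\"ormander's condition, the associated Carnot-Carath\'eodory distance $\rho$ is bi-Lipschitz to the Kor\'anyi gauge, and $(\mathbb{H}^1,\rho,dx)$ --- with $dx$ the Haar measure (a constant multiple of $\theta\wedge d\theta$) --- is a space of homogeneous type of homogeneous dimension $4$, so $|B(x,r)|\approx r^4$. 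It suffices to establish the estimate for $q=\tfrac{4p}{4-p}$, the Sobolev exponent attached to this dimension; since the left-hand side of the asserted inequality is nondecreasing in $q$, the same constant then serves for all smaller $q$.

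First, recall from \cite{FLW} the representation formula: for the given compact $K\subset\Omega$ there are $r_0=r_0(K,\Omega,\{X_j\})$ and $C$ so that for every ball $B=B(x,r)$ with $x\in K$, $0<r<r_0$, and every $f\in\operatorname{Lip}(\bar B)$,
\begin{equation}\label{plan:rep}
|f(y)-f_{B,dx}|\le C\int_B\frac{\rho(y,z)}{|B(y,\rho(y,z))|}\,|\nabla_b f(z)|\,dz,\qquad y\in B,
\end{equation}
where $f_{B,dx}$ is the Lebesgue average. Replacing $f_{B,dx}$ by the $\mu$-average $f_B$ of the statement costs only a constant factor, since $|c-f_B|\le\bigl(\tfrac1{\mu(B)}\int_B|f-c|^q\,d\mu\bigr)^{1/q}$ for any constant $c$ by Jensen, and $\mu=e^{4u}\,dx$ is doubling (by Theorem \ref{main3}, $e^{4u}$ is a strong $A_\infty$ weight). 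Because $|B(y,s)|\approx s^4$, the kernel in \eqref{plan:rep} is comparable to $\rho(y,z)^{1-4}$, i.e.\ to the kernel of the fractional integral $I_1$ of order $1$ on $(\mathbb{H}^1,\rho,dx)$; hence the theorem reduces to the two-weight estimate
\begin{equation}\label{plan:frac}
\Bigl(\frac1{\mu(B)}\int_B\bigl(I_1(\chi_B g)\bigr)^q\,d\mu\Bigr)^{1/q}\le C\,r\,\Bigl(\frac1{\nu(B)}\int_B g^p\,d\nu\Bigr)^{1/p},\qquad g\ge 0,
\end{equation}
with $\mu=e^{4u}\,dx$, $\nu=e^{(4-p)u}\,dx$. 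This is exactly the kind of estimate \cite{FLW} extract from \eqref{plan:rep}, under a balance (Muckenhoupt-Wheeden $A_{p,q}$-type) condition relating $\mu$, $\nu$ and the metric balls.

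The crux --- and the reason the statement is a genuine \emph{corollary} of \cite{FLW} --- is that for our particular pair the balance condition collapses to the hypotheses already in hand. Writing $w=e^{4u}$, one has $\nu=w^{(4-p)/4}\,dx$; since $w$ is, by Theorem \ref{main3}, a strong $A_\infty$ weight and, by hypothesis, an $A_p$ weight with $1\le p<4$, it obeys a reverse H\"older inequality, and the elementary two-sided bound
\[
\nu(B)\approx|B|^{p/4}\,w(B)^{(4-p)/4}
\]
(upper bound by Jensen, lower bound by reverse H\"older) combines with the $A_p$ inequality for $w$ to yield both the balance condition for $(\mu,\nu)$ and the normalization $\nu(B)^{1/p}\lesssim r\,\mu(B)^{1/q}$ that produces the factor $r$ in \eqref{plan:frac}; all constants here are controlled by the $A_p$-character of $w$, which by the construction in Theorem \ref{main3} is controlled by $\alpha,\beta$ (and $p$). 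The endpoint $p=1$ is treated the same way, via the weak-type $A_1$ version of \eqref{plan:frac} together with Jensen on the right. Assembling \eqref{plan:frac}, \eqref{plan:rep} and the average swap, and tracking constants, gives the inequality with $c=c(K,\Omega,\alpha,\beta,p)$. The main obstacle is precisely this reduction --- verifying that the strong $A_\infty$ and $A_p$ structure of $e^{4u}$ suffices to meet the hypotheses of the \cite{FLW} Poincar\'e inequality; the remaining ingredients are the standard localization of H\"ormander-vector-field estimates and the bookkeeping of constants. Finally, as noted before the statement, the further passage from a strong $A_\infty$ weight to an isoperimetric inequality --- the David-Semmes step --- remains unavailable in this CR setting.
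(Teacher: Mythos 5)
Your proposal is correct and follows essentially the same route as the paper: the inequality is obtained as a corollary of the Franchi--Lu--Wheeden weighted Poincar\'e theorem with $\omega_1=e^{(4-p)u}$, $\omega_2=e^{4u}$ and $q=\tfrac{4p}{4-p}$, the crux being the verification of their balance condition through the two-sided comparability $\int_B w^{(4-p)/4}dx\approx |B|^{p/4}\bigl(\int_B w\,dx\bigr)^{(4-p)/4}$ for the $A_\infty$ weight $w=e^{4u}$ (Jensen in one direction, the Str\"omberg--Wheeden/reverse-H\"older estimate in the other). The only difference is cosmetic: you unpack the FLW machinery (representation formula and fractional integral bound), whereas the paper invokes their Theorem~2 as a black box and checks the balance condition directly.
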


\section{Preliminaries}\label{sect:preliminary}

On a Heisenberg group $\mathbb H^n$, one can also define the $A_p$ weight, in the same way as on the Euclidean space $\mR$.  For a nonnegative local integrable function $\omega$, we call it an $A_p$ weight $p>1$, if for all balls $B$ in $\mathbb H^n$
\begin{equation}\dstyle
\frac{1}{|B|} \int_B \omega (x)dx \cdot \left( \frac{1}{|B|}\large \int_B \omega(x)^{-\frac{p'}{p}} dx \right)^{\frac{p}{p'}}\leq C<\infty.
\end{equation}
Here $\frac{1}{p'}+ \frac{1}{p}=1$. The constant $C$ is uniform for all $B$. The definition of $A_1$ weight is given by taking the limit process $p\rightarrow 1$.
Namely, $\omega$ is called an $A_1$ weight, if 
\begin{equation}
\dstyle M \omega(x) \leq C \omega(x),
\end{equation}
for almost all $x\in B$.

An important property of $A_p$ weight is the reverse H\"older inequality:
if $\omega$ is an $A_p$ weight for some $p \geq 1$, then there exist an $r>1$ and a $C>0$ such that for all balls $B$
\begin{equation}\dstyle
\left( \frac{1}{|B|} \int_B \omega^r dx  \right)^{1/r}\leq \frac{C}{|B|} \int_B \omega dx.
\end{equation}
This would imply that any $A_p$ weight $\omega$ satisfies the doubling property: there exists a $C>0$ s.t. 
\begin{equation}
\int_{B(x_0, 2r)} \omega(x) dx \leq C \int_{B(x_0,r)} \omega (x) dx
\end{equation}
for all balls $B(x_0,r)$.

The notion of strong $A_\infty$ weight was first proposed by David and Semmes in \cite{DS}. Given a positive continuous weight $\omega$, we define 
\begin{equation}
\delta_\omega (x,y):= \left (  \int_{B_{x,y}}  \omega(z) dz \right )^{1/n},
\end{equation}
where $B_{x,y}$ is the ball with diameter $|x-y|$ that contains $x$ and $y$. 
On the other hand, we can define the geodesic distance with respect to the weight $\omega$ to be
\begin{equation}d_\omega (x,y):= \inf_\gamma   \int_{\gamma}   \omega^{\frac{1}{n}}(s) ds.\end{equation}
Here $\gamma\subset B_{x,y}$ is a curve connecting $x, y$ such that the tangent vector is always contact.
If $\omega$ is an $A_\infty$ weight, then it is easy to prove (see for example Proposition 3.12 in \cite{S2}) 
\begin{equation}
d_\omega(x,y) \leq C\delta_\omega (x,y)
\end{equation}
for all $x,y \in \mathbb H^n$. If in addition, $\omega$ also satisfies the reverse in equality
\begin{equation}
\delta_\omega(x,y)\leq C d_{\omega}(x,y)
\end{equation}
then we say $\omega$ is a strong $A_\infty$ weight.

The product of an $A_1$ weight and an $A_\infty$ weight is an $A_\infty$ weight. This can be proved using the same proof as in the Euclidean space.

\section{CR-manifold with negative $Q'$-curvature}\label{sect:negative}
In this section, we will prove Theorem \ref{main2}. It shows that for CR-manifolds with negative $Q'$-curvature, the integral of $Q'$-curvature controls the geometry in a very rigid way.

We first remark that since $Q'(y) e^{4u(y)}$ is integrable, $ \log \frac{|y|}{|x-y|} Q'(y) e^{nu(y)}$ is also integrable in $y$ for each fixed $x\in \mathbb{H}^1$. 

In this section, we consider the analytic property of $e^{4u(x)}$. For simplicity, we denote it by $\omega_2(x)$.
We define $ \beta:= \int_{\mathbb{H}^1}|Q'|(y)e^{4u(y)} dy<\infty$.
Recall that for a nonnegative continuous function $\omega(x)$,
$$d_{\omega}(x,y):= (\int_{B_{xy} } \omega(z)dz )^{\frac{1}{n}},
$$
$$\delta_{\omega}(x,y):=\inf_{\gamma} \displaystyle \int_{\gamma}\omega^{\frac{1}{n}}(\gamma(s))ds,
$$
where $B_{xy}$ is the ball with diameter $|x-y|$ that contains $x$ and $y$,
the infimum is taken over all contact curves (meaning that the tangent vector on each point of this curve is contact) $\gamma\subset B_{xy}$ connecting $x$ and $y$, and $ds$ is the arc length.

We want to prove $\omega_2(x):=e^{4u(x)}$ is a strong $A_\infty$ weight,
i.e. there exists a constant $C=C(\beta)$ such that
\begin{equation}\label{Ainfty2}
\frac{1}{C(\beta)} d_{\omega_2}(x,y)
\leq \delta_{\omega_2}(x,y)\leq C(\beta) d_{\omega_2}(x,y).
\end{equation}
Since the Webster scalar curvature is nonnegative at infiniity, by Proposition in \cite{WY}, $u$ is normal. Thus 
\begin{equation}
u(x)=\displaystyle \frac{-1}{c_1'}\int_{\mathbb{H}^1}\log \frac{|y|}{| x- y|} |Q'|(y)e^{4u( y)} dy.\\
\end{equation}

We first observe that without generality we can assume $|x-y|=2$. This is because we can dilate $u$ by a factor $\lambda>0$,
\begin{equation}
\begin{split}
u^\lambda(x):= u(\lambda x)=&
\displaystyle \frac{-1}{c_1'}\int_{\mathbb{H}^1}\log \frac{|y|}{|\lambda x- y|} |Q'|(y)e^{4u( y)} dy.\\
\end{split}
\end{equation}
By the change of variable, this is equal to
$$\displaystyle \frac{-1}{c_1'}\int_{\mathbb{H}^1}\log \frac{|y|}{|x-y|} |Q'|(\lambda y)e^{4u(\lambda y)}\lambda^4 dy.$$
Notice $|Q'|(\lambda y)e^{4u(\lambda y)}\lambda^4$ is still an integrable function on $\mathbb{H}^1$, with integral equal to $\beta$. Thus by choosing $\lambda =\frac{2}{|x-y|}$, the problem reduces to proving inequality (\ref{Ainfty2}) for $u^\lambda $ and $|x-y|=2$.


Let us denote the midpoint of $x$ and $y$ by $p_0$. And from now on, we adopt the notation
$\lambda B:=B (p_0,\lambda )$. Since $|x-y|=2$, we have $B_{xy}= B(p_0, 1)= B$.
We also define
\begin{equation}
u_1(x):=\displaystyle \frac{-1}{c_1'}\int_{10B}\log \frac{|y|}{|x-y|} |Q'|(y)e^{4u(y)} dy,
\end{equation}
and
\begin{equation}
u_2(x):=\displaystyle \frac{-1}{c_1'}\int_{\mathbb{H}^1\setminus 10 B} \log \frac{|y|}{|x-y|} |Q'|
(y)e^{4u(y)} dy.
\end{equation}

In the following lemma, we prove that when $z$ is close to $p_0$, the difference between $u_2(z)$ and $u_{2}(p_0)$ is controlled by $\beta$.
\begin{lemma}\label{claim1}
\begin{equation}\label{3.1}
|u_2(z)-u_{2}(p_0)|\leq \frac{ \beta}{4c_1'}
\end{equation}
for $z\in 2 B $.
\end{lemma}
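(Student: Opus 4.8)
The plan is to estimate $u_2(z) - u_2(p_0)$ by writing it as a single integral over $\mathbb{H}^1 \setminus 10B$ of the kernel difference:
\begin{equation}
u_2(z) - u_2(p_0) = \frac{-1}{c_1'} \int_{\mathbb{H}^1 \setminus 10B} \log \frac{|p_0 - y|}{|z - y|} \, |Q'|(y) e^{4u(y)} \, dy.
\end{equation}
Since $|Q'|(y) e^{4u(y)}$ integrates to $\beta$ over all of $\mathbb{H}^1$, it suffices to show that the pointwise kernel factor satisfies
\begin{equation}
\left| \log \frac{|p_0 - y|}{|z - y|} \right| \leq \frac{1}{4}
\end{equation}
for all $z \in 2B$ and all $y \notin 10B$; then the lemma follows immediately by pulling the sup out of the integral. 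So the entire content reduces to a geometric comparison of Korányi-type distances.

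To carry this out, first I would recall that $z \in 2B$ means $|z - p_0| < 2$, while $y \notin 10B$ means $|y - p_0| \geq 10$. The Heisenberg (Korányi) gauge $|\cdot|$ satisfies a quasi-triangle inequality with some constant; but more usefully, for $y$ far from both $z$ and $p_0$ one expects $|z - y|$ and $|p_0 - y|$ to be comparable with a ratio tending to $1$ as $|y - p_0| \to \infty$. Concretely I would use the pseudo-triangle inequality to bound $|z - y| \leq c_H(|z - p_0| + |p_0 - y|) \leq c_H(2 + |p_0 - y|)$ and similarly $|p_0 - y| \leq c_H(|p_0 - z| + |z - y|) \leq c_H(2 + |z - y|)$, which upon rearranging gives two-sided bounds on the ratio $|z-y|/|p_0-y|$ of the form $1 - C/|p_0 - y| \leq |z-y|/|p_0-y| \leq 1 + C/|p_0-y|$ once $|p_0 - y| \geq 10$. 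Taking logarithms and using $|\log(1+t)| \leq 2|t|$ for $|t|$ small, the factor is bounded by $C/|p_0 - y| \leq C/10$, and after tracking the constants (or, if needed, replacing the radius $10$ by a larger universal radius $R_0$) one arrives at the bound $1/4$. If the constant $c_1'$ is large this is automatic; otherwise one absorbs it by enlarging the exclusion ball, which only affects the splitting $u = u_1 + u_2$ harmlessly.

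The main obstacle is the non-Euclidean nature of the Korányi gauge: the triangle inequality fails, so the naive estimate $||z-y| - |p_0-y|| \leq |z - p_0|$ is not available, and one must work with the quasi-triangle inequality (or an explicit expansion of the gauge $|(\zeta, t)| = (|\zeta|^4 + t^2)^{1/4}$ in coordinates) to get the ratio close to $1$. A clean way around this is to note that the gauge is smooth away from the origin and homogeneous of degree one under the parabolic dilations, so on the region $\{|y - p_0| \geq 10, \ |z - p_0| \leq 2\}$ the function $y \mapsto \log(|p_0 - y|/|z-y|)$ is bounded by a constant times $|z - p_0| / |p_0 - y|$ uniformly, by a mean-value / scaling argument; this is the estimate I would isolate as a preliminary geometric lemma (or cite from \cite{WY} if it already appears there). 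Everything else is then a one-line application of $\int_{\mathbb{H}^1}|Q'| e^{4u} = \beta$.
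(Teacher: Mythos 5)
Your proposal is correct and follows essentially the same route as the paper: both rewrite $u_2(z)-u_2(p_0)$ as a single integral with kernel $\log\frac{|z-y|}{|p_0-y|}$, bound that kernel pointwise by $\tfrac14$ for $z\in 2B$, $y\notin 10B$ (the paper via a mean-value estimate giving $|z-p_0|\cdot\tfrac18\le\tfrac14$, you via two-sided ratio bounds from the triangle inequality), and then integrate against $|Q'|e^{4u}$ to obtain $\frac{\beta}{4c_1'}$. Your hedges are unnecessary: since the Kor\'anyi gauge obeys the exact triangle inequality, $\tfrac45\le \frac{|z-y|}{|p_0-y|}\le \tfrac65$, so $\left|\log\frac{|z-y|}{|p_0-y|}\right|\le \log\tfrac54<\tfrac14$ already with the radius $10$ as given (no enlargement of the ball, which would alter the definition of $u_2$), and the size of $c_1'$ is irrelevant because the required kernel bound does not involve $c_1'$.
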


\begin{proof}
\begin{equation}
\begin{split}
&|u_2(z)-u_{2}(p_0)|\\
=& \frac{1}{c_1'}\left|\int_{\mathbb{H}^1\setminus 10 B}-\log \frac{|y|}{|z-y|} |Q'|(y)e^{4u(y)}dy+  \int_{\mathbb{H}^1\setminus 10 B}\log \frac{|y|}{|p_0-y|} |Q'|(y)e^{4u(y)}dy\right |\\
=&\frac{1}{c_1'}\left|\int_{\mathbb{H}^1\setminus 10 B}\log \frac{|z-y|}{|p_0-y|} |Q'|(y)e^{4u(y)}dy\right|\\
\leq &\frac{|z-p_0|}{c_1'}\cdot \int_{\mathbb{H}^1\setminus 10 B}\frac{1}{|(1-t^*)(p_0-y)+ t^*(z-y)|}|Q'|(y)e^{4u(y)}dy, \\
\end{split}
\end{equation}
for some $t^*\in[0,1]$. Since $y\in \mathbb{H}^1\setminus 10 B$ and $z, p_0\in 2B$,
$$\frac{1}{|(1-t^*)(p_0-y)+ t^*(z-y)|}\leq \frac{1}{8},$$
$|u_2(z)-u_{2}(p_0)|$ is bounded by
\begin{equation}
\begin{split}
\frac{|z-p_0|}{8 c_1' }\cdot \int_{\mathbb{H}^1\setminus 10 B}|Q'|(y)e^{4u(y)}dy.
\end{split}
\end{equation}
Note that for $z\in 2B$, $|z-p_0|\leq 2 $. From this, (\ref{3.1}) follows.
\end{proof}

Now we adopt some techniques used in \cite{BHS}
for potentials to deal with the $\epsilon$-singular set $E_\epsilon$.
\begin{lemma}\label{Hauss1measure}(Cartan's lemma) For the Radon measure $|Q'|(y)e^{4u(y)}dy$, given $\epsilon>0$, there exists a set $E_\epsilon\subseteq \mathbb{H}^1$, such that
$$\mathcal{H}^1(E_\epsilon):= \dstyle \inf_{ E_\epsilon \subseteq \cup B_i }\{\dstyle \sum_{i}\mbox{diam } B_i \}< 10\epsilon $$
and for all $x\notin E_\epsilon$ and $r>0$,
$$\dstyle \int_{B(x,r)} |Q'|(y)e^{4u(y)}dy\leq \frac{ r \beta}{\epsilon }. $$
\end{lemma}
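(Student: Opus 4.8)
The plan is to prove Cartan's lemma by the classical Vitali covering argument, transplanted to $\mathbb{H}^1$ with its Kor\'anyi gauge metric; this is the tool that replaces, in the sub-Riemannian potential-theoretic setting, the pointwise control one would otherwise get from a maximal function. Write $\mu$ for the finite positive Radon measure $|Q'|(y)e^{4u(y)}\,dy$, so that $\mu(\mathbb{H}^1)=\beta<\infty$ (this is exactly the standing hypothesis, together with $|Q'|e^{4u}\ge 0$). The exceptional set is defined as the set on which the desired linear growth fails:
\[
E_\epsilon:=\Bigl\{x\in\mathbb{H}^1:\ \mu\bigl(B(x,r)\bigr)>\tfrac{r\beta}{\epsilon}\ \text{for some }r>0\Bigr\}.
\]
With this choice the second assertion of the lemma is automatic: if $x\notin E_\epsilon$ then, by construction, $\mu(B(x,r))\le r\beta/\epsilon$ for every $r>0$. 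So the whole task is the Hausdorff-content bound $\mathcal{H}^1(E_\epsilon)<10\epsilon$.

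First I would observe that the balls witnessing membership in $E_\epsilon$ have uniformly bounded radii: for $x\in E_\epsilon$ pick $r_x>0$ with $\mu(B(x,r_x))>r_x\beta/\epsilon$; since $\mu(B(x,r_x))\le\beta$, this forces $r_x<\epsilon$. Thus $\{B(x,r_x)\}_{x\in E_\epsilon}$ is a cover of $E_\epsilon$ by balls of radius less than $\epsilon$. Next I would apply the $5r$-covering (Vitali) lemma — valid in any metric space once the radii are uniformly bounded, in particular in $(\mathbb{H}^1,\rho)$ — to extract a countable pairwise disjoint subfamily $\{B(x_i,r_{x_i})\}_i$ with $E_\epsilon\subseteq\bigcup_i B(x_i,5r_{x_i})$. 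Then, using $\operatorname{diam}B(x_i,5r_{x_i})\le 10\,r_{x_i}$, the defining inequality $r_{x_i}<\tfrac{\epsilon}{\beta}\,\mu(B(x_i,r_{x_i}))$, and disjointness,
\[
\mathcal{H}^1(E_\epsilon)\le\sum_i\operatorname{diam}B(x_i,5r_{x_i})\le 10\sum_i r_{x_i}<\frac{10\epsilon}{\beta}\sum_i\mu\bigl(B(x_i,r_{x_i})\bigr)\le\frac{10\epsilon}{\beta}\,\mu(\mathbb{H}^1)=10\epsilon ,
\]
which is the required estimate. (If $E_\epsilon=\emptyset$ the bound is trivial.)

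The step that needs the most care is the covering lemma itself, precisely because $\mathbb{H}^1$ is genuinely non-Euclidean: Besicovitch's covering theorem fails here, so one cannot pass to a bounded-overlap subfamily, and the argument must rest on the plain $5r$-Vitali lemma, which fortunately requires nothing beyond boundedness of the radii. The remaining points are pure bookkeeping: one must use the \emph{same} (gauge) metric for the balls appearing in the growth bound and for the diameters defining $\mathcal{H}^1$, and note that $\mu$ is indeed a finite Radon measure, absolutely continuous with respect to Haar measure on $\mathbb{H}^1$. No measurability of $E_\epsilon$ is needed since Hausdorff content is defined on arbitrary sets, although in fact $E_\epsilon$ is a countable union of open sets.
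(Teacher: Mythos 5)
Your argument is correct, and since the paper itself omits the proof of Lemma \ref{Hauss1measure} (saying only that it ``follows from standard measure theory argument''), your Vitali $5r$-covering proof is exactly that intended standard argument: define $E_\epsilon$ as the set where linear growth fails, note the witnessing radii satisfy $r_x<\epsilon$ because $\mu(\mathbb{H}^1)=\beta$, extract a disjoint subfamily, and sum. The remaining details (the degenerate case $\beta=0$, where $E_\epsilon=\emptyset$, and the survival of strict inequality under the infinite summation, which follows since the sums are finite and at least one term is strict) are harmless and essentially as you indicate.
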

The proof of Lemma 1 follows from standard measure theory argument. Thus we omit it here.

\begin{proposition}\label{claim2} Given $\epsilon>0$,
$$  \mathcal{H}^1\left( \left\{x\in 10 B :\left |\frac{-1}{c_1'} \int_{10 B}\log \frac{1}{|x-y|} |Q'|(y)e^{4u}(y)dy  \right| >\frac{C_0\beta }{\epsilon}\right\}\right)< 10\epsilon.$$.
\end{proposition}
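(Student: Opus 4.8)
The plan is to bound the potential $v(x) := \frac{-1}{c_1'}\int_{10B}\log\frac{1}{|x-y|}|Q'|(y)e^{4u(y)}\,dy$ pointwise, off a small exceptional set, by decomposing the kernel dyadically around the singularity at $y=x$ and then invoking Cartan's Lemma (Lemma \ref{Hauss1measure}). First I would split the logarithm: on $10B$ the quantity $|x-y|$ is bounded above by $\mathrm{diam}(10B) = 20$, so $\log\frac{1}{|x-y|}$ is only large (and positive) when $|x-y|$ is small, while the part where $|x-y|$ is of order $1$ contributes at most a constant multiple of $\beta$. Thus the genuine difficulty is the local piece $\int_{B(x,\rho_0)}\log\frac{1}{|x-y|}|Q'|(y)e^{4u(y)}\,dy$ for some fixed $\rho_0$ (say $\rho_0 = 1$), where $\log\frac{1}{|x-y|} \geq 0$.

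For that local piece I would write $B(x,\rho_0) = \bigcup_{k\geq 0} A_k$ with $A_k = \{ y : 2^{-k-1}\rho_0 < |x-y| \leq 2^{-k}\rho_0 \}$, on which $\log\frac{1}{|x-y|} \leq \log\frac{1}{2^{-k-1}\rho_0} \leq C(k+1)$. Then
\begin{equation}
\left| \int_{B(x,\rho_0)}\log\tfrac{1}{|x-y|}|Q'|(y)e^{4u(y)}\,dy \right| \leq C\sum_{k\geq 0}(k+1)\int_{A_k}|Q'|(y)e^{4u(y)}\,dy \leq C\sum_{k\geq 0}(k+1)\int_{B(x,2^{-k}\rho_0)}|Q'|(y)e^{4u(y)}\,dy.
\end{equation}
Now apply Cartan's Lemma with parameter $\epsilon$: outside a set $E_\epsilon$ with $\mathcal H^1(E_\epsilon) < 10\epsilon$, each term is at most $2^{-k}\rho_0\,\beta/\epsilon$, and the series $\sum_k (k+1)2^{-k}$ converges to an absolute constant. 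This gives the bound $C_0\beta/\epsilon$ for all $x \notin E_\epsilon$ (with $x \in 10B$), which is exactly the claim, after absorbing the contribution of the non-singular part of the kernel into $C_0$.

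The main obstacle I anticipate is purely bookkeeping rather than conceptual: one must make sure the balls $B(x,2^{-k}\rho_0)$ appearing in the dyadic sum are the ones to which Cartan's Lemma applies (the Lemma is stated for all $r>0$ and all $x\notin E_\epsilon$, so this is fine), and one must check that the factor $|y|$ that was dropped from the kernel $\log\frac{|y|}{|x-y|}$ in passing to $\log\frac{1}{|x-y|}$ is indeed harmless on $10B$ — but since $p_0 \in B = B(p_0,1)$ and the set $10B$ is bounded and bounded away from the origin only if $0 \notin 10B$; in general $\log|y|$ is bounded on $10B$ except near $y=0$, where it is again a locally integrable singularity that can be treated by an identical dyadic-plus-Cartan argument (or absorbed since the statement here keeps only the $\log\frac{1}{|x-y|}$ factor). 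So the genuine content is the single dyadic decomposition above combined with Cartan's Lemma, and the constant $C_0$ is an absolute constant coming from $\sum_k (k+1)2^{-k}$ together with the diameter of $10B$.
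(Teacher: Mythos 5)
Your proposal is correct and takes essentially the same route as the paper: the paper also combines Cartan's lemma (Lemma \ref{Hauss1measure}) with a dyadic decomposition of $10B$ into annuli $B(x,2^{-j}\cdot 10)\setminus B(x,2^{-(j+1)}\cdot 10)$ centered at $x$, bounds $\log\frac{1}{|x-y|}$ on each annulus, and sums the convergent series $\sum_j (\,\cdot\,)2^{-j}$ against the Cartan bound $2^{-j}\cdot 10\,\beta/\epsilon$. The only cosmetic difference is that the paper feeds the far-field scale ($j=-1$) through the same Cartan estimate rather than bounding it separately by $C\beta$; your absorption of that term into $C_0\beta/\epsilon$ is harmless in any case, since for large $\epsilon$ the statement is trivial because every subset of $10B$ has $\mathcal{H}^1$ at most $\mathrm{diam}(10B)$.
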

\begin{proof}
Fix $\epsilon>0$. By Lemma \ref{Hauss1measure}, there exists a set $E_\epsilon\subseteq \mathbb{H}^1$, s.t.
$\mathcal{H}^1(E_\epsilon)< 10\epsilon $ and
for $x\notin E_\epsilon$ and $r>0$
\begin{equation}\label{3.3}\dstyle \int_{B(x,r)}|Q'|(y) e^{4u(y)} dy \leq \frac{ r\beta}{\epsilon }.\end{equation}
If we can show for some $C_0$
\begin{equation}\label{3.2}\dstyle 10 B \setminus E_\epsilon\subseteq \left\{ x\in 10 B :\left| \frac{-1}{c_1'}\int_{10 B}\log \frac{1}{|x-y|}|Q'|(y)e^{4u(y)}dy\right| \leq \frac{C_0}{\epsilon} \beta\right\}, \end{equation}
then
$$\mathcal{H}^1\left( \left\{x\in 10 B :\left |\frac{-1}{c_1'} \int_{10 B}\log \frac{1}{|x-y|} |Q'|(y)e^{4u}(y)dy  \right| >\frac{C_0\beta }{\epsilon}\right\}\right)\leq  \mathcal{H}^1(E_\epsilon)<\\
10\epsilon.$$
To prove (\ref{3.2}), we notice for $x\in 10 B\setminus E_\epsilon$, $r= 2^{-j}\cdot 10$, (\ref{3.3}) implies
\begin{equation}
\begin{split}
&\left|\dstyle \frac{-1}{c_1'}\int_{10 B} \log \frac{1}{|x-y|} |Q'|(y)e^{4u(y)} dy\right|\\
\leq &\frac{1}{c_1'} \dstyle \sum_{j=-1}^\infty   \left|\int_{B(x, 2^{-j}\cdot 10)\setminus B(x, 2^{-(j+1)}\cdot 10)} \log \frac{1}{|x-y|} |Q'|(y)e^{4u(y)} dy\right|\\
\leq&\dstyle \frac{1}{c_1'}  \sum_{j=-1}^\infty \left(\max\{|\log 2^{-j}|,|\log 2^{-(j+1)}|\} + \log 10\right) \cdot \\
&\hspace{20mm}\int_{B(x, 2^{-j}\cdot 10)\setminus B(x, 2^{-(j+1)}\cdot 10)} |Q'|(y)e^{4u(y)}dy\\
\leq &\dstyle \frac{1}{c_1'}  \sum_{j=-1}^\infty \left(\max\{|\log 2^{-j}|,|\log 2^{-(j+1)}|\} + \log 10\right) \cdot  \frac{2^{-j}\cdot 10 \beta}{\epsilon }\\
\leq & \frac{C_0\beta}{\epsilon },\\
\end{split}
\end{equation}
where $$C_0=\frac{ 10 \sum_{j=-1}^\infty \left(\max\{|\log 2^{-j}|,|\log 2^{-(j+1)}|\}+ \log 10\right) \cdot 2^{-j}}{c_1'}<\infty. $$
This completes the proof of the proposition.
\end{proof}
We next estimate the integral of $e^{4u(z)} $ over $2B$.
\begin{proposition}\label{3.8}Let $\bar{c}:= \frac{-1}{c_1'}\int_{10B} \log |y||Q'|(y)e^{4u(y)}dy$. $\bar{c}<\infty$, since $|Q'|(y)e^{4u(y)}$ is continuous thus bounded near the origin. Then
\begin{equation}\label{3.4}
\int_{2B} e^{4u(z)}dz\leq C_1(\beta) e^{4u_2(p_0)} e^{4\bar{c}},
\end{equation}
for $C_1$
depends only on $\beta$.
\end{proposition}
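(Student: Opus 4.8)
The plan is to split the conformal factor as $u = u_1 + u_2 + \bar c$ on the ball $2B$, where $u_1$ is the local potential, $u_2$ is the far-field potential, and the constant $\bar c$ absorbs the $\log|y|$ contribution from $10B$. By Lemma \ref{claim1}, $u_2(z)$ is within $\beta/(4c_1')$ of $u_2(p_0)$ for all $z \in 2B$, so $e^{4u_2(z)} \leq e^{\beta/c_1'} e^{4u_2(p_0)}$ and this factor pulls out of the integral as a constant depending only on $\beta$. Likewise $e^{4\bar c}$ is a fixed constant. Therefore it suffices to bound $\int_{2B} e^{4 u_1(z)}\, dz$ by a constant depending only on $\beta$, where
\begin{equation*}
u_1(z) = \frac{-1}{c_1'}\int_{10B} \log\frac{1}{|z-y|}\,|Q'|(y) e^{4u(y)}\, dy + \frac{-1}{c_1'}\int_{10B}\log|y|\,|Q'|(y)e^{4u(y)}\,dy,
\end{equation*}
and the second term here is exactly $\bar c$, already accounted for; so really the task is to control $\int_{2B} \exp\bigl(\tfrac{-4}{c_1'}\int_{10B}\log\frac{1}{|z-y|}|Q'|(y)e^{4u(y)}dy\bigr) dz$.

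The key step is to estimate this integral using a John–Nirenberg / Moser–Trudinger type argument, exploiting that the exponent is, up to the normalizing constant, the Riesz-type potential of the measure $d\mu = |Q'| e^{4u}\,dy$ restricted to $10B$, whose total mass is at most $\beta$. Concretely, I would first reduce to the case where the mass is normalized, writing the potential as an average $\int_{10B}\log\frac{1}{|z-y|}\,\frac{d\mu(y)}{\|\mu\|}$ against a probability measure, apply Jensen's inequality to bring the exponential inside the $y$-integral, and arrive at a bound of the form $\int_{2B}\bigl(\int_{10B}|z-y|^{-4\beta/(c_1'\|\mu\|)\cdot\|\mu\|}\,d\mu(y)/\|\mu\|\bigr)dz$, i.e. $\int_{2B}\int_{10B}|z-y|^{-4\beta/c_1'}\,d\mu(y)\,dz$ divided by $\beta$. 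Then Fubini and the elementary computation $\int_{2B}|z-y|^{-\alpha}\,dz \leq C(\alpha)$ for $\alpha < Q = 4$ (the homogeneous dimension of $\mathbb H^1$) finish the bound — \emph{provided} $4\beta/c_1' < 4$, i.e. $\beta < c_1'$. If one only assumes $\beta < \infty$ (as in the statement of Theorem \ref{main2} / Lemma for negative $Q'$), one instead does the John–Nirenberg iteration directly: use Proposition \ref{claim2}, which says the potential exceeds $C_0\beta/\epsilon$ only on a set of $\mathcal H^1$-measure $< 10\epsilon$, together with the Cartan covering from Lemma \ref{Hauss1measure} to convert the small $\mathcal H^1$-content into a small Lebesgue-measure (Hausdorff content bounds measure in the homogeneous dimension), and sum the resulting layer-cake decomposition $\int_{2B} e^{4u_1} = \int_0^\infty |\{e^{4u_1} > t\}|\,dt$ as a geometric-type series in $\epsilon = 1/k$.

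The main obstacle I expect is the last point: making the passage from the $\mathcal H^1$-content estimate of the super-level sets (Proposition \ref{claim2}) to an actual Lebesgue-measure estimate, and then checking that the resulting series over dyadic thresholds converges with a sum controlled purely by $\beta$. This requires that the constant $C_0$ in Proposition \ref{claim2} be used carefully so that the level sets $\{u_1 > C_0 \beta k\}$ have measure decaying fast enough (exponentially in $k$) — which is where the structure of the logarithmic kernel and the homogeneity $Q=4$ of $\mathbb H^1$ both enter. I would handle this by choosing $\epsilon_k$ so that $C_0\beta/\epsilon_k = $ (threshold at level $k$), bounding $|\{u_1 > \text{level } k\}| \lesssim (\mathcal H^1\text{-content})^{?}$ — here one must be slightly careful, since $\mathcal H^1$-content does not directly dominate $4$-dimensional Lebesgue measure, so instead I would cover the bad set by the Cartan balls, estimate the Lebesgue measure of each such ball as $(\mathrm{diam})^4$, and use that $\sum_i (\mathrm{diam}\, B_i)^4 \leq (\sum_i \mathrm{diam}\, B_i)^4 < (10\epsilon_k)^4$, giving the needed fast decay. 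Summing over $k$ then yields $\int_{2B} e^{4u_1}\,dz \leq C(\beta)$, and combining with the $u_2$ and $\bar c$ factors gives \eqref{3.4}.
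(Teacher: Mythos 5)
Your decomposition $u=u_1+u_2$, the use of Lemma \ref{claim1} to freeze $e^{4u_2}$ at $p_0$, and the idea of Jensen's inequality against the probability measure $|Q'|e^{4u}\,dy/\beta_{10}$ are exactly the paper's strategy. But your execution of the Jensen step contains a sign error that derails the rest. In the negative-$Q'$ setting the normal representation carries the factor $-1/c_1'$ against $|Q'|e^{4u}$, so on $2B$
\begin{equation*}
u_1(z)-\bar c \;=\; \frac{1}{c_1'}\int_{10B}\log|z-y|\,|Q'|(y)e^{4u(y)}\,dy ,
\end{equation*}
and Jensen gives $e^{4(u_1(z)-\bar c)}\le \int_{10B}|z-y|^{+4\beta_{10}/c_1'}\,\frac{|Q'|e^{4u}}{\beta_{10}}\,dy$ with a \emph{positive} exponent. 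Since $z\in 2B$ and $y\in 10B$, $|z-y|$ is bounded above, so this kernel is bounded by a constant depending only on $\beta$; integrating over $2B$ and using Fubini immediately gives $\int_{2B}e^{4u_1}\,dz\le C(\beta)e^{4\bar c}$. No singular Riesz kernel appears, no smallness condition $\beta<c_1'$ is needed, and no John--Nirenberg iteration is required --- which is essential, because the proposition is stated under the sole hypothesis $\beta<\infty$. (In fact the upper bound is even easier than Jensen suggests: $\log|z-y|\le \log 12$ pointwise, and negative values of the log only help an upper bound.)

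Your fallback route does not close the gap it is meant to close. Proposition \ref{claim2} together with the Cartan-ball covering gives, at level $C_0\beta/\epsilon$, a bad set covered by balls with $\sum_i \mathrm{diam}\,B_i<10\epsilon$, hence Lebesgue measure $\lesssim \epsilon^4$. Writing $t\sim e^{4C_0\beta/\epsilon}$, this is only a decay of order $(\log t)^{-4}$ for $|\{e^{4\hat u_1}>t\}|$, so the layer-cake integral $\int_1^\infty |\{e^{4\hat u_1}>t\}|\,dt$ (equivalently, the sum $\sum_k e^{4C_0\beta k}k^{-4}$ over your levels $\epsilon_k=1/k$) diverges; the decay you obtain is polynomial in $k$, not the exponential decay you acknowledge you would need. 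This is precisely why the paper uses the Cartan/content machinery only at a single fixed level $\epsilon=1/20$, and only for the \emph{lower} bound of the curve integral $\int_\gamma e^{u_1}\,ds$ in the proof of Theorem \ref{main2}, never to bound $\int_{2B}e^{4u_1}\,dz$ from above. Once you correct the sign in the Jensen step, your first route becomes the paper's proof and the fallback is unnecessary.
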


\begin{proof}Recall
\begin{equation}
u_1(x):=\displaystyle \frac{-1}{c_1'}\int_{10B}\log \frac{|y|}{|x-y|} |Q'|(y)e^{4u(y)} dy,
\end{equation}
and
\begin{equation}
u_2(x):=\displaystyle \frac{-1}{c_1'}\int_{\mathbb{H}^1\setminus 10 B} \log \frac{|y|}{|x-y|} |Q'|(y)e^{4u(y)} dy.
\end{equation}
By Lemma \ref{claim1},
\begin{equation}\label{3.12}
\begin{split}
\int_{2B} e^{4u(z)}dz=&\int_{2B} e^{4u_1(z)}e^{4u_2(z)}dz\\
\leq&  e^{\frac{\beta}{c_1'}}e^{4u_2(p_0)}\int_{2B} e^{4u_1(z)}dz.\\
\end{split}
\end{equation}

To estimate $u_1$, by definition
$\beta_{10}:=\int_{10B} |Q'|(y)e^{4u(y)} dy\leq \beta<\infty $.
If $\beta_{10}= 0$, then $u_1(z)=0$ and $\bar{c}:= \frac{-1}{c_1'}\int_{10B} \log |y||Q'|(y)e^{4u(y)}dy=0$. So (\ref{3.4}) follows immediately. If $\beta_{10}\neq 0$,
$\frac{|Q'|(y)e^{4u(y)}}{\beta_{10}} dy$ is a nonnegative probability measure on $10 B$. Hence by Jensen's inequality
\begin{equation}
\begin{split}
\int_{2B} e^{4u_1(z)}dz= &e^{4\bar{c}}\cdot \int_{2B} e^{\frac{4}{c_1'} \int_{10B}(\log |z-y|) |Q'|(y)e^{4u(y) } dy } dz\\
\leq &e^{4\bar{c}}\cdot \int_{2B}  \int_{10B}|z-y|^{\frac{4\beta_{10}}{c_1'}} \frac{|Q'|(y)e^{4u(y)} }{\beta_{10}}dy  dz.\\
\end{split}
\end{equation}
Since $z\in 2B$ and $y\in 10B$,
\begin{equation}
 \int_{2B}|z-y|^{\frac{4\beta_{10}}{c_1'}}dz\leq C.
\end{equation}
From this, we get
\begin{equation}
\begin{split}
\int_{2B} e^{4u_1(z)}dz\leq &  C e^{4\bar{c}}  \int_{10B} \frac{|Q'|(y)e^{4u(y)}}{\beta_{10}}dy=C e^{4\bar{c}} .\\
\end{split}
\end{equation}
Plugging it to (\ref{3.12}), we finish the proof of the proposition.
\end{proof}
Now we are ready to prove Theorem \ref{main2}.\\
\noindent{\it{Proof of Theorem \ref{main2}.}}
Let us assume $\omega_2:=e^{4u}$ is an $A_p$ weight for some large $p$, with bounds depending only on
$\beta$. The proof of this fact follows that of Proposision 5.1 in \cite{YW15}. So we omit it here. 
By the reverse H\"{o}lder's inequality for $A_p$ weights, it is easy to prove (see for example Proposition 3.12 in \cite{S2}),
$$\delta_{\omega_2}(x,y)\leq C_2(\beta) d_{\omega_2}(x,y).$$
Hence we only need to prove the other side of the inequality:
\begin{equation}\label{3.9}\delta_{\omega_2}(x,y)\geq C_3(\beta) d_{\omega_2}(x,y),
\end{equation}
for some constant $C_3(\beta)$.
By Proposition \ref{claim2}, for a given $\epsilon>0$, there exists a Borel set $E_\epsilon\subseteq
\mathbb{H}^1$, such that
\begin{equation}\label{3.13}\mathcal{H}^{1}(E_\epsilon)\leq 10 \epsilon,\end{equation} and for
$z\in 10B \setminus E_\epsilon$, according to (\ref{3.2})
\begin{equation}
\label{3.7}
|\hat{u}_1(z)|\leq \frac{C_0}{\epsilon}\beta. \end{equation}
Here $$\hat{u}_1(z):=\dstyle \frac{-1}{c_1'}\int_{10 B}\log \frac{1}{|x-y|} |Q'|(y)e^{4u(y)}dy.  $$
With this, we claim the following estimate.\\
{{\bf Claim:} Suppose $\mathcal{H}^{1}(E_\epsilon)< 10\epsilon$ with $\epsilon\leq \frac{1}{20}$. Then the  length of $\gamma\setminus E_\epsilon$ with respect to the metric of Heisenberg group $\mathbb{H}^1$ satisfies
\begin{equation}\label{arclength-est}
\dstyle \mbox{length } (\gamma\setminus E_\epsilon)> \frac{3}{2},
\end{equation}
where $\gamma\subset B_{xy}$ is a curve connecting $x$ and $y$.
}

{\it {Proof of Claim.}}
Let $P$ be the projection map from points in $B_{xy}$ to the contact line segment $I_{xy}$ between $x$ and $y$. Since the Jacobian of the projection map is less or equal to 1,
\begin{equation}\label{3.5}
\mbox{length } (\gamma\setminus E_\epsilon)\geq
\mbox{length } (P(\gamma\setminus E_\epsilon))= m(P(\gamma\setminus E_\epsilon)),
\end{equation}
where $m$ is the arc length measure on the line segment $I_{xy}$.
Notice $P(\gamma)=I_{xy}$, and
$P(\gamma)\setminus P(E_\epsilon)$ is a subset of $P(\gamma\setminus E_\epsilon)$. Therefore
\begin{equation}\label{3.6}
m(P(\gamma\setminus E_\epsilon))\geq m(P(\gamma))- m(P(E_\epsilon))=2-m(P(E_\epsilon)).\end{equation}
Now by assumption, $\mathcal{H}^{1}(E_\epsilon)< 10\epsilon$,
 so $\mathcal{H}^{1}(\gamma\cap E_\epsilon)< 10\epsilon$. Hence
there is a covering $\cup_i B_i$ of $\gamma \cap E_\epsilon$, so that
$$ \dstyle \sum_{i}\mbox{diam } B_i< 10\epsilon. $$
This implies that $\cup_{i} P(B_{i})$ is a covering of the set $P(\gamma \cap E_\epsilon)$
and
$$\dstyle \sum_{i}\mbox{diam } P(B_i)= \dstyle \sum_{i}\mbox{diam } B_i\leq 10\epsilon. $$
Thus $m(P(E_\epsilon))= \mathcal{H}^1(P(E_\epsilon))< 10\epsilon< \frac{1}{2}$, by choosing $\epsilon \leq \frac{1}{20}$. Plug it to (\ref{3.6}), and then to (\ref{3.5}). This completes the proof of the claim.

We now continue the proof of Theorem \ref{main2}.
Since $\gamma \subset B$, then by Lemma \ref{claim1},
\begin{equation}
\begin{split}
\dstyle \int_{\gamma} e^{u_-(\gamma(s))} ds=\int_{\gamma} e^{(u_1+u_2)(\gamma(s))} ds \geq &\dstyle e^{\frac{-\beta}{4c_1'}}e^{u_{2}(p_0)}  e^{\bar{c}}  \int_{\gamma}  e^{\hat{u}_1(\gamma(s))}ds.\\
\end{split}
\end{equation}
Here $\bar{c}$ is the constant defined in Proposition \ref{3.8}.
Let $\epsilon =\frac{1}{20}$. By (\ref{3.7}),
$$|\hat{u}_1(z)|\leq 20C_0 \beta $$
for $z\in 10B\setminus E_\epsilon$.
Thus
\begin{equation}
\dstyle \int_{\gamma}  e^{\hat{u}_1(\gamma(s))}ds\geq e^{- 20 C_0 \beta} \mbox{length } (\gamma\setminus E_\epsilon) .\\
\end{equation}
By (\ref{arclength-est}), it is bigger than
$$\frac{3}{2}e^{- 20 C_0 \beta}. $$
Therefore
\begin{equation}\label{3.10}
\begin{split}
\dstyle \int_{\gamma} e^{u_-(\gamma(s))} ds\geq
\frac{3}{2} e^{\frac{-\beta}{4c_1'}} e^{- 20  C_0 \beta}e^{u_{2}(p_0)}  e^{\bar{c}}=C_4(\beta)e^{u_{2}(p_0)}  e^{\bar{c}}
\end{split}
\end{equation}
for $C_4(\beta)=\frac{3}{2} e^{\frac{-\beta}{4c_1'}} e^{- 20  C_0 \beta}$.
By inequality (\ref{3.10}) and Proposition \ref{3.8}, we conclude for any curve $\gamma\subset B_{xy}$ connecting $x$ and $y$, there is a $C_3=C_3(\beta)$ such that
\begin{equation}
\begin{split}
\dstyle \int_{\gamma} e^{u_-(\gamma(s))} ds\geq
C_{3}(\beta) (\int_{B_{xy}} e^{4u_-(z)}dz)^{\frac{1}{4}}.
\end{split}
\end{equation}
This implies inequality (\ref{3.9}) and thus completes the proof of Theorem \ref{main2}.

\section{ $Q'$-curvature without a sign} 
In this section, we consider CR-manifold on which the $Q'$-curvature does not have a sign any more.
Suppose $(\mathbb{H}^1, e^{2u}\theta)$ satisfies that 
\begin{equation}
\alpha:=\displaystyle \int_{\mathbb{H}^1} Q'^+ e^{4u}\theta \wedge d\theta<c_1', 
\end{equation}
 \begin{equation}
\beta:=\displaystyle \int_{\mathbb{H}^1} Q'^- e^{4u}\theta \wedge d\theta<\infty. 
\end{equation}
Suppose also that the Webster scalar curvature is nonnegative at infinity.

By Theorem \ref{main2}, $e^{4u^-}$ is a strong $A_\infty$ weight.
By Theorem 1.4 in \cite{WY}, $e^{4u^+}$ is an $A_1$ weight. 

\begin{proposition}\label{prop:3.1}
Assume $\omega_1$ is an $A_1$ weight, $\omega_2$ is a strong $A_\infty$ weight. If $\omega_1^r \omega_2$ for some $r\in \mathbb{R}$ is an $A_\infty$ weight, then $\omega_1^r \omega_2$ a strong $A_\infty$ weight.
\end{proposition}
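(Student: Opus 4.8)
The plan is to show directly that the strong $A_\infty$ inequality for $\omega := \omega_1^r\omega_2$ follows by combining (i) the definition of strong $A_\infty$ weight for $\omega_2$, (ii) the $A_1$ (hence Harnack-type) property of $\omega_1$, and (iii) the fact that $\omega$ is assumed to be $A_\infty$, which by Proposition~3.12 in \cite{S2} already gives the easy direction $d_\omega(x,y)\le C\delta_\omega(x,y)$. So the only thing to prove is the reverse inequality $\delta_\omega(x,y)\le C d_\omega(x,y)$, i.e. that a short contact curve $\gamma\subset B_{x,y}$ cannot make $\int_\gamma \omega^{1/n}$ too small compared with $(\int_{B_{x,y}}\omega)^{1/n}$.

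First I would fix $x,y$, set $B=B_{x,y}$, and use the $A_1$ property of $\omega_1$: by the reverse-H\"older/doubling machinery, $\omega_1$ is ``almost constant'' on $B$ in the sense that $\sup_B \omega_1 \le C\, \big(\tfrac{1}{|B|}\int_B\omega_1\big)$ and, on the average side, $\big(\tfrac{1}{|B|}\int_B \omega_1^s\big)^{1/s}\le C\,\tfrac{1}{|B|}\int_B\omega_1$ for some $s>1$. The point is that raising $\omega_1$ to a fixed power $r$ changes these comparisons only by controlled constants, so for the purposes of comparing $\delta$ and $d$, the factor $\omega_1^r$ on $B$ behaves like the constant $m_B:=\big(\fint_B\omega_1\big)^r$ up to a multiplicative constant $C=C(r,[\omega_1]_{A_1})$. (If $r<0$ one uses the analogous lower bound $\inf_B\omega_1\ge c\,\fint_B\omega_1$, which also follows from the $A_1$/doubling estimates; either way one gets a two-sided comparison $\omega_1^r \approx m_B$ on $B$ with constants independent of $B$.)

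Granting that, I would estimate: for any contact curve $\gamma\subset B$ joining $x,y$,
\[
\int_\gamma \omega^{1/n}
=\int_\gamma \omega_1^{r/n}\omega_2^{1/n}
\ge c\, m_B^{1/n}\int_\gamma \omega_2^{1/n}
\ge c\, m_B^{1/n}\,\delta_{\omega_2}(x,y),
\]
and since $\omega_2$ is a strong $A_\infty$ weight, $\delta_{\omega_2}(x,y)\ge c\, d_{\omega_2}(x,y)=c\big(\int_B\omega_2\big)^{1/n}$. On the other hand $\int_B\omega = \int_B \omega_1^r\omega_2 \le C\, m_B \int_B\omega_2$, again by the comparison $\omega_1^r\approx m_B$ on $B$, so $m_B^{1/n}\big(\int_B\omega_2\big)^{1/n}\ge c\big(\int_B\omega\big)^{1/n}=c\,d_\omega(x,y)$. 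Taking the infimum over $\gamma$ gives $\delta_\omega(x,y)\ge c\,d_\omega(x,y)$ with a uniform constant, which together with the $A_\infty$ direction yields the claimed strong $A_\infty$ property.

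The main obstacle is making the ``$\omega_1^r$ is comparable to a constant on $B$'' step fully rigorous and uniform in $B$: one must check that the $A_1$ constant of $\omega_1$ controls, with constants depending only on $r$ and $[\omega_1]_{A_1}$, both $\int_B\omega_1^r \le C\,m_B|B|$ and $\fint_\gamma \omega_1^{r/n} \ge c\, m_B^{1/n}$ simultaneously — the latter being the genuinely delicate point, since $\gamma$ is a one-dimensional set and $\omega_1^{r/n}$ could in principle be small on a thin set that $\gamma$ happens to follow. Here one invokes the $A_1$ bound in the form $\omega_1(z)\ge c^{-1}M\omega_1(z)\ge c^{-1}\fint_B\omega_1$ for a.e. $z\in B$ (for $r>0$), or its pointwise lower-bound analogue obtained from $A_1$ together with the reverse H\"older inequality for $r<0$; this gives the pointwise comparison along $\gamma$ and closes the argument. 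Everything else is bookkeeping with doubling constants.
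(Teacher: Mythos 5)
Your proposal has a genuine gap at its central step, the claimed two-sided comparison ``$\omega_1^r\approx m_B$ on $B$''. The $A_1$ condition gives only the one-sided estimate $\frac{1}{|B|}\int_B\omega_1\le C\,\operatorname{ess\,inf}_B\omega_1$, i.e.\ a pointwise \emph{lower} bound $\omega_1(z)\ge c\,\frac{1}{|B|}\int_B\omega_1$ a.e.\ in $B$; the inequality $\sup_B\omega_1\le C\,\frac{1}{|B|}\int_B\omega_1$ that you invoke is false for $A_1$ weights (take $\omega_1=\rho^{-\alpha}$ with $\rho$ the homogeneous norm on $\mathbb H^1$ and $0<\alpha<4$, and balls through the origin). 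Consequently, for $r>0$ your estimate $\int_B\omega_1^r\omega_2\le C\,m_B\int_B\omega_2$ is unjustified, and it cannot be derived from ``$\omega_1\in A_1$ and $\omega_2$ strong $A_\infty$'' alone: with $\omega_2\equiv1$ and $\omega_1=\rho^{-\alpha}$, any $r$ with $r\alpha\ge 4$ gives $\int_B\omega_1^r=\infty$ while $m_B|B|<\infty$. This is precisely where the hypothesis that $\omega_1^r\omega_2$ is $A_\infty$ must enter (you use it only for the easy direction): one splits $B$ into the set where $\omega_1\le t\,\frac{1}{|B|}\int_B\omega_1$ and its complement, controls the Lebesgue measure of the complement by Chebyshev, and absorbs its mass by the $A_\infty$ comparison for the product weight. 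The reverse H\"older inequality you cite holds only for some exponent $1+\epsilon$, and in any case gives Lebesgue-averaged rather than pointwise (or $\omega_2\,dx$-weighted) control, so it does not close this step either. For $r<0$ the defect moves to the curve estimate: there you need an \emph{upper} bound on $\omega_1$ along $\gamma$, which $A_1$ never provides, and the exceptional set where $\omega_1$ exceeds a multiple of its mean, although of small Lebesgue measure, can entirely contain the curve $\gamma$ (a set of measure zero), so no a.e.\ statement rescues the pointwise argument along $\gamma$; your parenthetical appeal to ``$\inf_B\omega_1\ge c\,\frac{1}{|B|}\int_B\omega_1$'' addresses the wrong direction.

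For comparison, the paper (following Semmes \cite{S2}) does not argue along curves at all: it reduces the strong $A_\infty$ property of $\omega_1^r\omega_2$ to the chain inequality $\delta_{12}(x_1,x_k)\le C\sum_{j}\delta_{12}(x_j,x_{j+1})$ for the associated quasidistance, with the chain points confined to a fixed dilate of $B_{x_1,x_k}$ so that all the small balls $B_j$ sit inside a comparable large ball, and then combines the $A_1$ lower bound for $\omega_1$ on that large ball, the chain inequality for $\delta_2$ coming from the strong $A_\infty$ property of $\omega_2$, and the $A_\infty$ property of the product at the level of measures of balls. Your step $\int_\gamma\omega_1^{r/n}\omega_2^{1/n}\ge c\,m_B^{1/n}\int_\gamma\omega_2^{1/n}$ for $r>0$ is fine (using continuity of the weights, as in the paper's setting), but as written the proposal does not prove the proposition for either sign of $r$.
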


\begin{rem}The proposition for the Euclidean space has been proved in \cite{S2}.
We prove here the proposition for Heisenberg groups. 
\end{rem}

\begin{proof}
Let $\delta_2(\cdot, \cdot)$ and $\delta_{12}(\cdot, \cdot)$ be the quasidistance associated to $\omega_2$ and  $\omega_1^r \omega_2$ respectively. Let $x_1,...,x_k \in \mathbb{H}^1$ such that $x_j\in B(x_1, 2|x_k-x_1|)$ for all $j$. Notice that it suffices to prove 
\begin{equation}
\displaystyle \delta_{12}(x_1,x_k)\leq C \sum_{j=1}^{k-1}\delta_{12} (x_j,x_{j+1}).
\end{equation}
Let $B=B_{x_1,x_k}$, and $B_j=B_{x_j,x_{j+1}}$. Since $x_j\in B(x_1, 2|x_k-x_1|)$ for all $j$, $B_j\subset 100B$ for all $j$. By definition $\delta_{12}$,
\begin{equation}
\delta_{12}(x_j,x_{j+1})
\end{equation}

\end{proof}

By Proposition \ref{prop:3.1}, in order to prove Theorem \ref{main3}, we only need to show that $e^{4u}$ is an $A_\infty$ weight. In other words, we need to show $e^{4u}$ is an $A_p$ weight for some $p$.
 
\begin{proposition}
Suppose $(\mathbb{H}^1, e^{2u}\theta)$  satisfies the same assumptions as in Theorem \ref{main3}. Then
$e^{4u}$ is an $A_p$ weight for some $p$. The $A_p$ bound depends only on the integral of $Q'$ curvature.
\end{proposition}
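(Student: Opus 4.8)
The plan is to realize $e^{4u}$ as a product $e^{4u^+}\cdot e^{4u^-}$, where $u^+$ and $u^-$ are the potentials built from $Q'^+$ and $Q'^-$ respectively, and then to combine the two weight estimates already available. Concretely, since $u$ is normal (the Webster scalar curvature is nonnegative at infinity, so the Proposition from \cite{WY} applies), we write
\begin{equation}
u(x)=\frac{-1}{c_1'}\int_{\mathbb{H}^1}\log\frac{|y|}{|x-y|}\,Q'(y)e^{4u(y)}\,dy
= u^+(x)+u^-(x),
\end{equation}
where $u^+$ uses $Q'^+$ and $u^-$ uses $-Q'^-$. By Theorem 1.4 of \cite{WY}, the hypothesis $\alpha<c_1'$ gives that $e^{4u^+}$ is an $A_1$ weight, with bound depending only on $\alpha$. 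By Theorem \ref{main2} (applied to the potential $u^-$, whose associated total mass is $\beta<\infty$), $e^{4u^-}$ is a strong $A_\infty$ weight, in particular an $A_\infty$ weight; and in the course of that proof we in fact saw that $e^{4u^-}$ is an $A_q$ weight for some finite $q$ with bounds depending only on $\beta$.

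The key step is then the stability of the $A_p$ classes under such products. Since $e^{4u^+}\in A_1$ and $e^{4u^-}\in A_q$, I would invoke the standard fact (valid on spaces of homogeneous type, hence on $\mathbb{H}^1$ with the Kor\'anyi gauge) that the product of an $A_1$ weight and an $A_q$ weight lies in $A_{p}$ for a suitable $p$ — one can take $p=q$, using that an $A_1$ weight $w_1$ satisfies $w_1\le C\,(\text{average of }w_1)$ on every ball, so that for $\mu=w_1w_2$ the two factors in the $A_q$ quotient
\begin{equation}
\Big(\frac{1}{|B|}\int_B \mu\Big)\Big(\frac{1}{|B|}\int_B \mu^{-1/(q-1)}\Big)^{q-1}
\end{equation}
are each controlled: the first by $(\text{avg }w_1)(\text{avg }w_2^{\,?})$ via the $A_1$ bound on $w_1$ and the reverse H\"older inequality for $w_2$, and the second by using $\mu^{-1/(q-1)}\le C\,(\text{avg }w_1)^{-1/(q-1)} w_2^{-1/(q-1)}$ pointwise. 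Chasing the constants shows they depend only on the $A_1$ bound of $e^{4u^+}$ and the $A_q$ bound of $e^{4u^-}$, hence only on $\alpha$ and $\beta$, hence only on the integral of the $Q'$-curvature. This is the same computation used in the Euclidean setting, and since all ingredients (maximal function estimates, reverse H\"older, doubling) hold verbatim on $\mathbb{H}^1$, it transfers without change.

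The main obstacle I anticipate is purely bookkeeping rather than conceptual: one must be careful that the $A_q$ exponent $q$ extracted from the proof of Theorem \ref{main2} is genuinely finite and quantitatively controlled by $\beta$, and that the reverse H\"older exponent for $e^{4u^-}$ is likewise quantitative — both facts are asserted in that proof by reference to Proposition 5.1 of \cite{YW15} and to \cite{S2}, so I would simply cite them. A secondary point requiring care is the splitting $u=u^++u^-$ itself: one must check that each of the potentials $u^\pm$ is individually finite a.e.\ and defines a legitimate normal-type solution to which the earlier theorems apply; this follows from the integrability hypotheses $\alpha<c_1'<\infty$ and $\beta<\infty$ exactly as in the remark at the start of Section \ref{sect:negative}. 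Once these are in place, the conclusion $e^{4u}\in A_p$ is immediate, and combined with Proposition \ref{prop:3.1} (with $\omega_1=e^{4u^+}$, $\omega_2=e^{4u^-}$, $r=1$) it yields Theorem \ref{main3}.
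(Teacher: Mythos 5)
Your decomposition $u=u_++u_-$ and the use of Theorem 1.4 of \cite{WY} to get $e^{4u_+}\in A_1$ with bound depending only on $\alpha$ are exactly the paper's starting point, but the step you rely on to conclude — that the product of an $A_1$ weight with an $A_q$ weight is again an $A_p$ weight (even ``$p=q$'') with controlled bound — is false in general, and this is where the proposal has a genuine gap. On $\mathbb{R}^n$ (or $\mathbb{H}^1$ with the homogeneous dimension in place of $n$) take $w_1=|x|^{-n+\delta}$ and $w_2=|x|^{-n+\delta}$ with $\delta$ small: both are $A_1$, hence $w_2\in A_q$ for every $q$, yet $w_1w_2=|x|^{-2n+2\delta}$ is not locally integrable and so lies in no $A_p$ class. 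The sketched verification breaks precisely at the first factor: the $A_1$ condition gives the pointwise \emph{lower} bound $w_1\gtrsim \frac{1}{|B|}\int_B w_1$ (equivalently it controls averages by the infimum), so it cannot be used to bound $\frac{1}{|B|}\int_B w_1w_2$ by the product of the two averages; and the H\"older/reverse-H\"older route needs $w_1\in RH_{s'}$ and $w_2\in RH_{s}$ for \emph{conjugate} exponents, which the self-improving exponents coming from $A_1(\alpha)$ and $A_q(\beta)$ do not provide uniformly. So knowing $e^{4u_-}\in A_q$ (from the proof of Theorem \ref{main2}) is not enough to multiply it against $e^{4u_+}$.

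The paper's proof avoids this by never using the full-strength weight $e^{4u_-}$ as an $A_q$ factor. Instead it observes that for $\epsilon=\epsilon(\beta)$ small, $-\epsilon u_-$ is the potential of the nonnegative datum $\epsilon Q'^-e^{4u}$ whose total mass $\epsilon\beta$ is below $c_1'$, so Theorem 1.4 of \cite{WY} applies \emph{again} and yields that $e^{-4\epsilon u_-}$ is an $A_1$ weight with bound depending only on $\beta$. Writing $e^{4u}=e^{4u_+}\cdot\bigl(e^{-4\epsilon u_-}\bigr)^{-1/\epsilon}$ puts the weight in the Jones form $w_1w_2^{1-p}$ with $w_1,w_2\in A_1$ and $p=1+1/\epsilon$ (the paper's choice $\epsilon=p'/p$), and the $A_p$ quotient is then checked directly: in each of the two factors only \emph{negative} powers of an $A_1$ weight are estimated pointwise (which is legitimate, since $A_1$ bounds negative powers from above by averages), while the positive powers are left under the integral and cancel against the averages. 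This gives $e^{4u}\in A_p$ for some large $p$ depending on $\beta$ — note in particular that one cannot take $p$ equal to the exponent $q$ of $e^{4u_-}$; the exponent necessarily deteriorates as $\beta$ grows. If you want to keep your outline, replace the product lemma by this scaling trick: it is the one genuinely new idea your write-up is missing, and the rest (including the final combination with Proposition \ref{prop:3.1} to get Theorem \ref{main3}) is as you describe.
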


\begin{proof}
\begin{equation}u(x)=
\displaystyle \frac{1}{c_1'}\int_{\mathbb H^1} \log \frac{|y|}{|x-y|} Q'(y)e^{4u(y)} dy
\end{equation}
with assumptions (\ref{assumption1}) and (\ref{assumption2}),
By Theorem 1.4 in \cite{WY}, $e^{4u_+}$ is an $A_1$ weight, so there is a uniform constant $C=C(\alpha)$, so that for all $x_0\in \mathbb H^1$ and $r>0$
\begin{equation}
\dstyle \frac{1}{|B(x_0,r)|} \int_{B(x_0,r)} e^{4u_+(y)} dy \leq C(\alpha) e^{4u_+(x_0)}.
\end{equation}
So for all $x\in B(x_0,r)$
\begin{equation}
\begin{split}
 \dstyle \frac{1}{|B(x_0,r)|} \int_{B(x_0,r)} e^{4u_+(y)} dy \leq& \dstyle \frac{1}{|B(x_0,r)|} \int_{B(x,2r)} e^{4u_+(y)} dy\\
&=  \frac{2^4}{|B(x,2r)|} \int_{B(x,2r)} e^{4u_+(y)} dy\\
&\leq  C(\alpha) e^{4u_+(x)}.\\
\end{split}
\end{equation}
Namely, for all ball $B$ in $\mathbb H^1$ and $x\in B$,
\begin{equation}\label{5.3}
 \dstyle \frac{1}{|B|} \int_{B} e^{4u_+(y)} dy \leq C(\alpha) e^{4u_+(x)}.
\end{equation}
We observe that $e^{ -4 \epsilon  u_-(x)}  $ is also an $A_1$ weight for $\epsilon= \epsilon (\beta)<<1$.
In fact,
\begin{equation}e^{ -4\epsilon  u_-(x)}   = e^{ \frac{4}{c_1'}\int_{\mathbb H^1} \log\frac{|y|}{|x-y|} \epsilon Q^{ -}(y) e^{4u(y)} dy   }. \end{equation}
$ Q^{ -}(y) e^{4u(y)}\geq 0 $ and $\int_{\mathbb H^1} \epsilon Q^{ -}(y) e^{4u(y)} dy< c_1'$ if $\epsilon$ is small enough.
Thus by Theorem 1.4 in \cite{WY}, $e^{ -4 \epsilon  u_-(x)}  $ is an $A_1$ weight. As (\ref{5.3}), we have
\begin{equation}\label{5.4}
\dstyle \frac{1}{|B|} \int_{B} e^{ -4 \epsilon  u_-(y)}     dy \leq C(\beta) e^{ -4 \epsilon  u_-(x)}
\end{equation}
for all ball $B$ in $\mathbb H^1$ and all $x\in B$.
Choose $1<p<\infty$ such that $\epsilon= p'/p$ with $\frac{1}{p}+ \frac{1}{p'}= 1$.
Using $ e^{4u}= e^{4u_+}\cdot e^{4u_-} $, we get
\begin{equation}\label{5.16}
\begin{split}
&\left(\int_{B}  e^{4u(x)} dx\right)^{\frac{1}{p}}\left(\int_{B} (e^{4u(x)})^{-\frac{p'}{p}} dx\right)^{\frac{1}{p'}}\\
= &\left(\int_{B}  e^{4u_+}\cdot (e^{-4\epsilon  u_-} )^{-\frac{1}{\epsilon}}   dx\right)^{\frac{1}{p}} \left(\int_{B} (e^{ 4u_+})^{-\frac{p'}{p}} \cdot e^{-4\epsilon  u_-}   dx\right)^{\frac{1}{p'}}.\\
\end{split}
\end{equation}
By (\ref{5.4}), if $p$ is large enough and thus $\epsilon$ is small enough, then
$$(e^{-4\epsilon  u_-} )^{-\frac{1}{\epsilon}} \leq  \left(\frac{1}{C(\beta) |B|}\int_{B}e^{-4\epsilon  u_-}dx\right)^{-\frac{1}{\epsilon }}.$$
So
\begin{equation}\label{5.14}
\begin{split} \left(\int_{B}  e^{4u_+}\cdot (e^{-4\epsilon  u_-} )^{-\frac{1}{\epsilon}}   dx\right)^{\frac{1}{p}}
\leq& \left(\int_{B}  e^{4u_+}    dx\right)^{\frac{1}{p}}   \left(\frac{1}{C(\beta) |B|}\int_{B}e^{-4\epsilon  u_-}dx\right)^{-\frac{1}{\epsilon p}} \\
 =& \left(\int_{B}  e^{4u_+}    dx\right)^{\frac{1}{p}}   \left(\frac{1}{C(\beta)  |B|}\int_{B}e^{-4\epsilon  u_-}dx\right)^{-\frac{1}{p'}}.\\
\end{split}
\end{equation}
Similarly, by (\ref{5.3})
 $$(e^{ 4u_+})^{-\frac{p'}{p}}\leq \left(\frac{1}{C(\alpha)  |B|}\int_{B} e^{ 4u_+} dx \right)^{-\frac{p'}{p}}.$$
So \begin{equation}\label{5.15}
\left(\int_{B} (e^{ 4u_+})^{-\frac{p'}{p}} \cdot e^{- 4 \epsilon  u_-}  dx\right)^{\frac{1}{p'}}
\leq
\left(\frac{1}{ C(\alpha)   |B|}\int_{B} e^{ 4u_+} dx \right)^{-\frac{1}{p}}
\left(\int_{B}  e^{-4\epsilon  u_-} dx\right)^{\frac{1}{p'}}.
 \end{equation}
Applying (\ref{5.14}) to (\ref{5.15}) in (\ref{5.16}), we have
\begin{equation}
\left(\int_{B}  e^{4u(x)} dx\right)^{\frac{1}{p}}\left(\int_{B} (e^{4u(x)})^{-\frac{p'}{p}} dx\right)^{\frac{1}{p'}}\\
\leq (\frac{1}{C |B|})^{-\frac{1}{p}-\frac{1}{p'}}=C|B|
\end{equation}
for $p>>1$.
This shows that $ e^{4u(x)}$ is an $A_p$ weight for $p>>1$. The bound $C$ depends only on $\alpha$ and $\beta$.
\end{proof}

\section{Proof of Theorem \ref{main4}}
\begin{theorem}\cite[Theorem 2]{FLW} Let $\{X_j\}$ be a family of vector fields that satisfies H\"ormander's condition.
 Let $K$ be a compact subset of $\Omega$. Then there exists $r_0$ depending on $K$, $\Omega$ and $\{X_j\}$ such that if $B=B(x,r)$ is a ball with $x\in K$ and $0<r< r_0$, and if $1\leq p<q<\infty$ and $\omega_1$, $\omega_2$ are weights satisfying the balance condition \eqref{FLW1.5} for $B$, with 
 $\omega_1\in A_p (\Omega, \rho, dx)$ and $\omega_2$ doubling, then
 \begin{equation}
 (\frac{1}{\omega_2(B)} \int_B |f(x)- f_B |^q \omega_2(x)dx	)^{1/q} \leq cr (\frac{1}{\omega_1(B)} \int_B |Xf(x)|^p \omega_1(x)dx)^{1/p}
 \end{equation}
for any $f\in Lip (\bar{B})$, with $f_B= \omega_2 (B)^{-1} \int_B f(x) \omega_2(x) dx$. The constant $c$ depends only on $K, \Omega, \{X_j\}$ and the constants in the conditions imposed on $\omega_1$, and $\omega_2$. 

\end{theorem}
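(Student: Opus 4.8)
The plan is to deduce this two-weight Sobolev--Poincar\'e inequality from a \emph{pointwise representation formula} for the oscillation of $f$, combined with a \emph{two-weight boundedness estimate for a subelliptic fractional integral operator}. The balance condition is precisely what makes the latter estimate hold. In this scheme the left-hand side is reduced to an $L^q(\omega_2)$-norm of a potential applied to $|Xf|$, while the right-hand side is an $L^p(\omega_1)$-norm of $|Xf|$; in effect the entire difficulty migrates into a single mapping property of that potential, and the two structural hypotheses ($\omega_1\in A_p$, $\omega_2$ doubling) together with the balance condition are exactly the data that make the mapping property go through with the stated normalization.

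First I would establish the representation formula
\begin{equation}
|f(x)-f_B|\le C\int_{B^\ast}\frac{\rho(x,y)}{|B(x,\rho(x,y))|}\,|Xf(y)|\,dy
\qquad\text{for a.e. }x\in B,
\end{equation}
where $\rho$ is the Carnot--Carath\'eodory distance associated with $\{X_j\}$ and $B^\ast$ is a fixed dilate of $B$ (the passage back to $B$ on the right-hand side is routine and absorbed by the doubling hypotheses). The subelliptic inputs are: the Nagel--Stein--Wainger doubling property of Lebesgue measure with respect to $\rho$-balls; the basic $L^1$ Poincar\'e inequality of Jerison on $\rho$-balls at every scale; and the chaining/segment structure of $\rho$. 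Telescoping the scale-$2^{-k}r$ Poincar\'e estimate over a dyadic sequence of $\rho$-balls shrinking to $x$, and summing the resulting local averages against the kernel $\rho(x,y)/|B(x,\rho(x,y))|$, yields the displayed bound. This is the standard chaining argument for subelliptic Poincar\'e inequalities and I would simply reproduce it.

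With the formula in hand, on the space of homogeneous type $(\Omega,\rho,dx)$ define the order-one fractional integral
\begin{equation}
Tg(x):=\int_{B^\ast}\frac{\rho(x,y)}{|B(x,\rho(x,y))|}\,g(y)\,dy,
\end{equation}
so that $|f(x)-f_B|\le C\,T(|Xf|)(x)$ on $B$. The remaining task is the two-weight estimate
\begin{equation}
\Big(\frac{1}{\omega_2(B)}\int_B (Tg)^q\,\omega_2\,dx\Big)^{1/q}
\le C\,r\,\Big(\frac{1}{\omega_1(B)}\int_{B^\ast} |g|^p\,\omega_1\,dx\Big)^{1/p}.
\end{equation}
To prove it I would split $T$ into $\rho$-dyadic annuli, writing $Tg(x)\approx\sum_{k\ge 0}2^{-k}r\,A_k g(x)$, where $A_k g(x)=|B(x,2^{-k}r)|^{-1}\int_{B(x,2^{-k}r)}|g|$ is the Lebesgue average at scale $2^{-k}r$. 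The hypothesis $\omega_1\in A_p$ converts these unweighted averages into $\omega_1$-weighted ones (via H\"older and the $A_p$ inequality), the doubling of $\omega_2$ controls the outer $L^q(\omega_2)$-norm, and the balance condition bounds, at each scale and for each subball $B'\subseteq B$, the quantity $\tfrac{\mathrm{radius}(B')}{|B'|}\,\omega_2(B')^{1/q}$ against the $\omega_1$-quantity dictated by $A_p$, uniformly in $B'$. This is exactly what allows the geometric series in $k$ to be summed while the powers of the radius match, producing the factor $r$. The mechanism is the Sawyer--Wheeden--P\'erez theory of fractional integrals on spaces of homogeneous type.

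The main obstacle is this last two-weight fractional-integral estimate. Its delicacy is that the kernel of $T$ is measured by the \emph{Lebesgue} measure of $\rho$-balls, whereas the two norms are taken against $\omega_1$ and $\omega_2$; consequently the balance condition must be invoked at \emph{every} scale $2^{-k}r$ and for \emph{every} subball, and the summation over scales has to be carried out without losing the factor $r$. Executing this cleanly requires a good-$\lambda$ or dyadic-decomposition argument adapted to $(\Omega,\rho,dx)$, and it is here that the quantitative dependence of $c$ on the $A_p$ constant of $\omega_1$, the doubling constant of $\omega_2$, and the balance constant is produced. By contrast, the representation formula is routine once the Nagel--Stein--Wainger doubling and Jerison Poincar\'e estimates are granted.
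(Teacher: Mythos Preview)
The paper does not prove this statement at all: it is quoted verbatim from \cite[Theorem~2]{FLW} and used as a black box in the proof of Theorem~\ref{main4}. So there is no ``paper's own proof'' to compare your proposal against.

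That said, your outline is essentially the strategy of the original Franchi--Lu--Wheeden paper. They first establish a pointwise representation formula of the type
\[
|f(x)-f_B|\le C\int_{B^\ast}\frac{\rho(x,y)}{|B(x,\rho(x,y))|}\,|Xf(y)|\,dy,
\]
obtained by chaining the Jerison $L^1$--Poincar\'e inequality over a telescoping sequence of Carnot--Carath\'eodory balls (using the Nagel--Stein--Wainger doubling of Lebesgue measure), and then reduce the weighted Sobolev--Poincar\'e inequality to a two-weight bound for the associated fractional integral operator on the homogeneous space $(\Omega,\rho,dx)$. The balance condition \eqref{FLW1.5} is exactly the Sawyer--Wheeden type testing condition that makes the fractional integral bounded from $L^p(\omega_1)$ to $L^q(\omega_2)$ with the correct normalization, and the $A_p$ hypothesis on $\omega_1$ together with doubling of $\omega_2$ are used precisely as you describe. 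Your sketch is accurate and would, if carried out, reproduce the FLW argument; but for the purposes of the present paper none of this is needed, since the result is simply invoked.
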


The balance condition is stated as follows:
for two weight functions $\omega_1$, $\omega_2$ on $\Omega$ and $1\leq p< q<\infty$, a ball $B$ with center in $K$ and $r(B)<r_0$:
\begin{equation}\label{FLW1.5}
\frac{r(I)}{r(J)} (\frac{\omega_2 (I)}{\omega_2 (J)})^{1/q} \leq c  (\frac{\omega_1 (I)}{\omega_1 (J)})^{1/p} 
\end{equation}
for all metric balls $I, J$ with $I\subset J \subset B$.

\begin{proof} of Theorem \ref{main4}. It is obvious that 
$X_1:= \frac{\partial }{\partial x} +2y\frac{\partial }{\partial t}$, $X_2:= \frac{\partial }{\partial y} -2 x\frac{\partial }{\partial t}$ on the Heisenberg group $\mathbb H^1$ satisfy the H\"omander's condition. Let us take $\omega_1(x)= e^{(n-p)u(x)}$, $\omega_2(x)= e^{nu(x)}$, $q= \frac{np}{n-p}$.

We only need to check condition \eqref{FLW1.5}. Namely, we need to show
\begin{equation}\label{ineqn:5.3}
(\frac{r(I)}{r(J)})^\frac{np}{n-p} \frac{\int_I \omega_2 dx }{\int_J \omega_2 dx}  \leq c  (\frac{\int_I \omega^{\frac{n-p}{n}}_2 dx }{\int_J  \omega^{\frac{n-p}{n}}_2dx}) 
\end{equation}

This is true because $0\leq {\frac{n-p}{n}}<1$ and $\omega_2= e^{nu}$ is a strong $A_\infty$ weight, thus it is an $A_\infty$ weight. 
In fact, for any $A_\infty$ weight $w$,  $0\leq s<1$, by the result of Str\"omberg-Wheeden \cite{SW}
\begin{equation}
(\frac{1}{|B|} \int_B w(x)^s dx)^{\frac{1}{s}} \leq  C \frac{1}{|B| } \int_B w(x) dx.
\end{equation}
On the other hand, by H\"older's inequality,
\begin{equation}
 \frac{1}{|B| } \int_B w(x) dx\leq (\frac{1}{|B|} \int_B w(x)^{s} dx)^{\frac{1}{s}} 
\end{equation}
Therefore by taking $s=\frac{n-p}{n}$, \eqref{ineqn:5.3} holds.

\end{proof}



\begin{bibdiv}
\begin{biblist}




\normalsize
\baselineskip=17pt







\bib{Beckner}{article}{
   author={Beckner, William},
   title={Sharp Sobolev inequalities on the sphere and the Moser-Trudinger
   inequality},
   journal={Ann. of Math. (2)},
   volume={138},
   date={1993},
   number={1},
   pages={213--242},
   issn={0003-486X},

}

\bib{BHS}{article}{
AUTHOR = {Bonk, Mario} AUTHOR ={ Heinonen, Juha} AUTHOR ={Saksman, Eero},
     TITLE = {The quasiconformal {J}acobian problem},
 BOOKTITLE = {In the tradition of {A}hlfors and {B}ers, {III}},
    SERIES = {Contemp. Math.},
    VOLUME = {355},
     PAGES = {77--96},
 PUBLISHER = {Amer. Math. Soc.},
   ADDRESS = {Providence, RI},
      YEAR = {2004},
   MRCLASS = {30C65},
  MRNUMBER = {2145057 (2006d:30026)},
MRREVIEWER = {Matti Vuorinen},
       }

\bib{Branson}{article}{
   author={Branson, Thomas P.},
   title={Sharp inequalities, the functional determinant, and the
   complementary series},
   journal={Trans. Amer. Math. Soc.},
   volume={347},
   date={1995},
   number={10},
   pages={3671--3742},

}

\bib{BFM}{article}{
   author={Branson, Thomas P.},
   author={Fontana, Luigi},
   author={Morpurgo, Carlo},
   title={Moser-Trudinger and Beckner-Onofri's inequalities on the CR
   sphere},
   journal={Ann. of Math. (2)},
   volume={177},
   date={2013},
   number={1},
   pages={1--52},
   issn={0003-486X},

}

\bib{CY}{article}{
   author={Case, Jeffrey S.},
   author={Yang, Paul},
   title={A Paneitz-type operator for CR pluriharmonic functions},
   journal={Bull. Inst. Math. Acad. Sin. (N.S.)},
   volume={8},
   date={2013},
   number={3},
   pages={285--322},
   issn={2304-7909},

}


\bib{ChY}{article}{
   author={Chang, Sun-Yung A.},
   author={Yang, Paul C.},
   title={Extremal metrics of zeta function determinants on $4$-manifolds},
   journal={Ann. of Math. (2)},
   volume={142},
   date={1995},
   number={1},
   pages={171--212},
   issn={0003-486X},

}

\bib{CCY}{article}{
   author={Chanillo, Sagun},
   author={Chiu, Hung-Lin},
   author={Yang, Paul},
   title={Embedded three-dimensional CR manifolds and the non-negativity of
   Paneitz operators},
   conference={
      title={Geometric analysis, mathematical relativity, and nonlinear
      partial differential equations},
   },
   book={
      series={Contemp. Math.},
      volume={599},
      publisher={Amer. Math. Soc., Providence, RI},
   },
   date={2013},
   pages={65--82},

}

\bib{CCY2}{article}{
   author={Cheng, Jih-Hsin},
   author={Chiu, Hung-Lin},
   author={Yang, Paul},
   title={Uniformization of spherical $CR$ manifolds},
   journal={Adv. Math.},
   volume={255},
   date={2014},
   pages={182--216},
   issn={0001-8708},

}

\bib{CL}{article}{
   author={Ch{\^e}ng, Jih Hsin},
   author={Lee, John M.},
   title={The Burns-Epstein invariant and deformation of CR structures},
   journal={Duke Math. J.},
   volume={60},
   date={1990},
   number={1},
   pages={221--254},
   issn={0012-7094},

}

\bib{DS}{article}{
author={David, Guy},
   author={Semmes, Stephen},
   title={Strong $A_\infty$ weights, Sobolev inequalities and
   quasiconformal mappings},
   conference={
      title={Analysis and partial differential equations},
   },
   book={
      series={Lecture Notes in Pure and Appl. Math.},
      volume={122},
      publisher={Dekker, New York},
   },
   date={1990},
   pages={101--111},

}

\bib{Fefferman}{article}{
   author={Fefferman, Charles L.},
   title={Monge-Amp\`ere equations, the Bergman kernel, and geometry of
   pseudoconvex domains},
   journal={Ann. of Math. (2)},
   volume={103},
   date={1976},
   number={2},
   pages={395--416},

}

\bib{FH}{article}{
   author={Fefferman, Charles},
   author={Hirachi, Kengo},
   title={Ambient metric construction of $Q$-curvature in conformal and CR
   geometries},
   journal={Math. Res. Lett.},
   volume={10},
   date={2003},
   number={5-6},
   pages={819--831},
   issn={1073-2780},

}
		
\bib{Fiala}{article}{
AUTHOR = {Fiala, F.},
TITLE = {Le probl\`eme des isop\'erim\`etres sur les surfaces ouvertes
\`a courbure positive},
JOURNAL = {Comment. Math. Helv.},
FJOURNAL = {Commentarii Mathematici Helvetici},
VOLUME = {13},
YEAR = {1941},
PAGES = {293--346},
ISSN = {0010-2571},
MRCLASS = {52.0X},
MRREVIEWER = {J. J. Stoker},
}


\bib{FLW}{article}{
AUTHOR = {Franchi, B.},
  author={Lu, Guozhen},
    author={Wheeden, L. Richard },
TITLE = {Representation formulas and weighted Poincar\'e inequalities for Hormander vector fields},
JOURNAL = {Ann. Inst. Fourier, Grenoble},
VOLUME = {45},
   Number={2},
YEAR = {1995},
PAGES = {577--604},
}

\bib{GG}{article}{
   author={Gover, A. Rod},
   author={Graham, C. Robin},
   title={CR invariant powers of the sub-Laplacian},
   journal={J. Reine Angew. Math.},
   volume={583},
   date={2005},
   pages={1--27},
   issn={0075-4102},

}

\bib{GL}{article}{
   author={Graham, C. Robin},
   author={Lee, John M.},
   title={Smooth solutions of degenerate Laplacians on strictly pseudoconvex
   domains},
   journal={Duke Math. J.},
   volume={57},
   date={1988},
   number={3},
   pages={697--720},
   issn={0012-7094},

}

\bib{HK}{article}{
   author={Haj{\l}asz, Piotr},
   author={Koskela, Pekka},
   title={Sobolev met Poincar\'e},
   journal={Mem. Amer. Math. Soc.},
   volume={145},
   date={2000},
   number={688},
   pages={x+101},
   issn={0065-9266},

}

\bib{Hirachi}{article}{
   author={Hirachi, Kengo},
   title={Scalar pseudo-Hermitian invariants and the Szeg\H o kernel on
   three-dimensional CR manifolds},
   conference={
      title={Complex geometry},
      address={Osaka},
      date={1990},
   },
   book={
      series={Lecture Notes in Pure and Appl. Math.},
      volume={143},
      publisher={Dekker, New York},
   },
   date={1993},

}

\bib{Hirachi2}{article}{
   author={Hirachi, Kengo},
   title={$Q$-prime curvature on CR manifolds},
   journal={Differential Geom. Appl.},
   volume={33},
   date={2014},

   pages={213--245},
   issn={0926-2245},

}

\bib{Huber}{article}{
AUTHOR = {Huber, Alfred},
TITLE = {On subharmonic functions and differential geometry in the
large},
JOURNAL = {Comment. Math. Helv.},
FJOURNAL = {Commentarii Mathematici Helvetici},
VOLUME = {32},
YEAR = {1957},
PAGES = {13--72},
ISSN = {0010-2571},
MRCLASS = {30.00 (31.00)},
MRREVIEWER = {E. F. Beckenbach},
}

\bib{Jerison}{article}{
   author={Jerison, David},
   title={The Poincar\'e inequality for vector fields satisfying
   H\"ormander's condition},
   journal={Duke Math. J.},
   volume={53},
   date={1986},
   number={2},
   pages={503--523},
   issn={0012-7094},

}
	
\bib{S2}{article}{
AUTHOR = {Semmes, Stephen},
     TITLE = {Bi-{L}ipschitz mappings and strong {$A_\infty$} weights},
   JOURNAL = {Ann. Acad. Sci. Fenn. Ser. A I Math.},
  FJOURNAL = {Annales Academiae Scientiarum Fennicae. Series A I.
              Mathematica},
    VOLUME = {18},
      YEAR = {1993},
    NUMBER = {2},
     PAGES = {211--248},
      ISSN = {0066-1953},
     CODEN = {AAFMAT},
   MRCLASS = {30C99},
  MRNUMBER = {1234732 (95g:30032)},
MRREVIEWER = {S. K. Vodop{\cprime}yanov},
}



\bib{YW15}{article}{
author={Wang, Yi},
title={The isoperimetric inequality and $Q$-curvature},
journal={Adv. Math.},
volume={281},
date={2015},
pages={823--844},
}

\bib{WY}{article}{
author={Wang, Yi}
author={Yang, Paul},
title={Isoperimetric inequality on CR-manifolds with nonnegative $Q'$-curvature}
journal={to appear in Annali della CR-manifoldScuola Normale Superiore di Pisa}
date={2017},
}

\bib{Webster}{article}{
   author={Webster, S. M.},
   title={On the transformation group of a real hypersurface},
   journal={Trans. Amer. Math. Soc.},
   volume={231},
   date={1977},
   number={1},
   pages={179--190},
}

\bib{Wheeden}{article}{
 author={Franchi, B.},
   author={Gallot, S.},
   author={Wheeden, R. L.},
   title={Sobolev and isoperimetric inequalities for degenerate metrics},
   journal={Math. Ann.},
   volume={300},
   date={1994},
   number={4},
   pages={557--571},
   issn={0025-5831},

}

\bib{SW}{article}{
author= {Str\"omberg, J.-O.}, 
author={Wheeden, R. L.},
   title={Fractional integrals on weighted $H^{\{p\}}$ and $L^{\{p\}}$ spaces},
   journal={Trans. Amer. Math. Soc.},
   volume={287},
   date={1985},
   number={1},
   pages={293--321},
}









\end{biblist}
\end{bibdiv}

\end{document}